\newcommand\rurl[1]{%
  \href{https://#1}{\nolinkurl{#1}}%
}
\newtheorem{definition}{Definition}[section]
\newtheorem{theorem}[definition]{Theorem}
\newtheorem{lemma}[definition]{Lemma}
\newtheorem{corollary}[definition]{Corollary}
\newtheorem{proposition}[definition]{Proposition}
\newtheorem{example}[definition]{Example}
\newtheoremstyle{rem}
  {}
  {}
  {}            
  {}            
  {\bfseries}   
  {}            
  { }    
  {}            
\theoremstyle{rem}
\newtheorem{remark}[definition]{Remark}
\def\N{{\mathbb N}}
\def\Z{{\mathbb Z}}
\def\R{{\mathbb R}}
\def\T{{\mathbb T}}
\def\C{{\mathbb C}}
\def\Q{{\mathbb Q}}
\def\1{\textbf{1}}
\newcommand{\norm}[2]{\left\|#1\right\|_{{#2}}}
\newcommand{\lspan}{{\mathrm{span} \,}}
\newcommand{\supp}{{\mathrm{supp}}}
\newcommand{\lt}{{L^2(\R)}}
\newcommand{\ift}{{\mathcal{F}^{-1}}}
\newcommand{\ft}{{\mathcal{F}}}
\newcommand\thankssymb[1]{\textsuperscript{\@fnsymbol{#1}}}
\begin{document}

\title{Injectivity of Gabor phase retrieval from lattice measurements}
\author{Philipp Grohs\thanks{Faculty of Mathematics, University of Vienna, Oskar-Morgenstern-Platz 1, 1090 Wien, Austria, \href{mailto:philipp.grohs@univie.ac.at}{philipp.grohs@univie.ac.at}  \textbf{and} Johann Radon Institute of Applied and Computational Mathematics, Austrian Academy of Sciences, Altenbergstrasse 69, 4040 Linz, Austria, \href{mailto:philipp.grohs@oeaw.ac.at}{philipp.grohs@oeaw.ac.at} \textbf{and} Research Network DataScience@UniVie, University of Vienna, Kolingasse 14-16, 1090 Vienna, Austria} \, and  Lukas Liehr\thanks{Faculty of Mathematics, Universität Wien, Oskar-Morgenstern-Platz 1, 1090 Wien, Austria, \href{mailto:lukas.liehr@univie.ac.at}{lukas.liehr@univie.ac.at}}
}
\date{\today}
\maketitle

\begin{abstract}
We establish novel uniqueness results for the Gabor phase retrieval problem: if $\mathcal{G} : \lt \to L^2(\R^2)$ denotes the Gabor transform then every $f \in L^4[-\tfrac{c}{2},\tfrac{c}{2}]$ is determined up to a global phase by the values $|\mathcal{G}f(x,\omega)|$ where $(x,\omega)$ are points on the lattice $b^{-1}\Z \times (2c)^{-1}\Z$ and $b>0$ is an arbitrary positive constant. 
This for the first time shows that compactly-supported, complex-valued functions can be uniquely reconstructed from lattice samples of their spectrogram. Moreover, by making use of recent developments related to sampling in shift-invariant spaces by Gröchenig, Romero and Stöckler, we prove analogous uniqueness results for functions in shift-invariant spaces with Gaussian generator.
Generalizations to nonuniform sampling are also presented. Finally, we compare our results to the situation where the considered signals are assumed to be real-valued.

\vspace{1em}
\noindent \textbf{Keywords.} phase retrieval, spectrogram sampling, Gabor transform, shift-invariant spaces, lattice measurements

\noindent \textbf{AMS subject classifications.} 94A12, 94A20, 42A65
\end{abstract}

\vspace{1.8cm}

\break

\section{Introduction}

The phase retrieval problem concerns the reconstruction of a given signal $f\in \mathcal{C} $ in a signal class $\mathcal{C}$ contained in a given Banach space $\mathcal{B}$ from phaseless linear measurements. These measurements are of the form $\{|\Phi_\lambda(f)|: \lambda \in X\}$ with $(\Phi_\lambda)_{\lambda\in X}\subseteq  \mathcal{B}'$ constituting a family of linear functionals on $\mathcal{B}$. Problems of this kind arise in a large number of applications, most notably in coherent diffraction imaging and audio processing where the measurements $(\Phi_\lambda(f))_{\lambda\in X}\subseteq  \C$ typically arise as (samples of) the Fourier- or Gabor transform of a signal $f$, see for example the recent surveys \cite{Strohmer, GrohsKoppensteinerRathmair} and the references therein. The question of when any given function $f\in \mathcal{C}$ is uniquely determined from its phaseless linear measurements \emph{up to a global phase}, meaning that $|\Phi_\lambda(f)|=|\Phi_\lambda(h)|$ for all $\lambda \in X$ and some $f,h\in \mathcal{C}$ implies that there exists a global phase factor $\tau \in \T\coloneqq \{ z \in \C : |z| = 1 \}$ with $f=\tau h$, in general constitutes a difficult mathematical problem, especially when $\mathcal{B}$ is infinite-dimensional and $X$ is discrete as it would be the case in most practical applications. In the present paper, we study the phase retrieval problem arising from measurements of the Gabor transform. The Gabor transform $\mathcal{G} : \lt \to L^2(\R^2)$ is defined by
$$
\mathcal{G}f(x,\omega) = \int_\R f(t)\varphi(t-x)e^{-2\pi i t \omega} \, dt,
$$
where $\varphi$ is the Gaussian window function $\varphi(t) = e^{-\pi t^2}$. If not specified otherwise, all functions $f \in \lt$ are assumed to be complex-valued. For a subset $X \subseteq   \R^2$ of the time-frequency plane we are interested in reconstructing $f\in \mathcal{C} \subseteq   L^2(\R)$ from 
$
\{ |\mathcal{G}f(x,\omega)|  :  (x,\omega) \in X \}
$,
the corresponding spectrogram measurements of the Gabor transform given on $X$. Such problems arise, for instance, in ptychography \cite{da2015elementary} and audio processing \cite{pruuvsa2017phase}. In the language above, the linear functionals $\Phi_\lambda$ are the maps $\Phi_\lambda(f) \coloneqq \langle f,T_xM_\omega \varphi \rangle$ with $\lambda=(x,\omega) \in \R^2$, $\langle \cdot ,\cdot \rangle$ the $L^2$-inner product and $T_x, M_\omega$ the translation and modulation operator, defined by $T_x\psi(t) = \psi(\cdot - x)$ and $M_\omega \psi(t) = e^{2\pi i \omega t}\psi(t)$, respectively. In order to work with a concise notion of uniqueness in the absence of phase information we introduce the following condition.

\begin{definition}
Let $\mathcal{C} \subseteq \lt$ and $X \subseteq \R^2$. We say that the pair $(\mathcal{C},X)$ satisfies condition \textbf{(U)} if every $f \in \mathcal{C}$ is determined up to a global phase by the values $\{ |\mathcal G f(z)| : z \in X \}$. In formulas,
\[
\forall f,h \in \mathcal{C} : ( \, |\mathcal{G}f(z)| = |\mathcal{G}h(z)| \ \forall z \in X \implies \exists \tau \in \T : f=\tau h \, ) \tag{\textbf{U}}.
\]
\end{definition}

\subsection{Previous work}\label{sec:previous_work}

In the following, we provide a list of pairs $(\mathcal{C},X)$ which are known to satisfy condition $(\textbf{U})$.
\begin{enumerate} 
\item \label{itm:pair1} It is well-known that every $f \in \lt$ is determined up to a global phase by the measurements $\{ |\mathcal{G}f(x,\omega)| : (x,\omega) \in \R^2 \}$ and therefore $(\lt,\R^2)$ satisfies condition \textbf{(U)}. This is a direct consequence of the ambiguity-relation
$$\ft (|\mathcal{G}f|^2)(x,\xi) = A \hat f(x,\xi) \, \overline{A \varphi(x,\xi)},$$
and the fact that the ambiguity function $A\varphi$ does not vanish on $\R^2$ \cite[Theorem A.2]{GrohsRathmair}.
In the previous identity, the operator $\ft : L^2(\R^n) \to L^2(\R^n)$ denotes the Fourier transform, defined on $L^1(\R^n) \cap L^2(\R^n)$ via
$$
\ft f(\omega) = \hat f(\omega) = \int_{\R^n} f(x)e^{-2\pi i x \cdot \omega} \, dx
$$
and extends to a unitary operator on $L^2(\R^n)$.
In practice, the measurements $|\mathcal{G}f(x,\omega)|$ are never given on whole $\R^2$ and one seeks to prove similar uniqueness results where $X$ is "small" in the sense that $X$ is, for instance, discrete or a set of measure zero. 

\item  \label{itm:pair2} Assume that $e^{i\alpha_1} \R, e^{i\alpha_2}\R \subseteq \C$ are two lines which satisfy the condition $\alpha_1 - \alpha_2 \notin \pi\Q$. A result due to Jaming shows that every entire function of finite order is determined up to a global phase from its modulus on $e^{i\alpha_1} \R \cup e^{i\alpha_2}\R$ \cite[Theorem 3.3]{Jaming}. This result was recently extended by Perez \cite{perez_2021}. The Gabor transform is, except for a non-zero weighting factor and a reflection, an entire function. This follows from the relation
$$
\mathcal{G}f(x,-\omega) = e^{\pi i x \omega} Bf(z)e^{-\frac{\pi}{2}|z|^2}
$$
where $z = x+i\omega$ and $Bf$ is the Bargmann transform of $f$ \cite[Proposition 3.4.1]{Groechenig}. The Bargmann transform of a function $f \in \lt$ is an entire function of order less or equal than two \cite[Theorem 2]{BrianCHall}. As a direct consequence, Jaming's theorem yields the following choice of $\mathcal{C}$ and $X$.

\begin{theorem}\label{thm:jaming}
Let $\alpha_1,\alpha_2 \in [0,2\pi)$ with $\alpha_1 - \alpha_2 \notin \pi\Q$. Set $\mathcal{C} = \lt$ and $X = e^{i\alpha_1} \R \cup e^{i\alpha_2}\R$. Then $(\mathcal{C},X)$ satisfies condition \textbf{(U)}.
\end{theorem}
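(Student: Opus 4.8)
The plan is to transfer the problem from the Gabor transform to the Bargmann transform and then quote Jaming's theorem essentially verbatim. I would begin by fixing $f,h \in \lt$ with $|\mathcal{G}f(z)| = |\mathcal{G}h(z)|$ for every $z \in X$, and rewrite this hypothesis in terms of the entire functions $Bf$ and $Bh$. Taking moduli in the identity $\mathcal{G}f(x,-\omega) = e^{\pi i x \omega}\,Bf(x+i\omega)\,e^{-\pi(x^2+\omega^2)/2}$ and using that $|e^{\pi i x\omega}| = 1$ while $e^{-\pi(x^2+\omega^2)/2} > 0$, I obtain $|Bf(x+i\omega)| = e^{\pi(x^2+\omega^2)/2}\,|\mathcal{G}f(x,-\omega)|$ and an identical expression for $h$. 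Hence $|Bf(w)| = |Bh(w)|$ holds precisely at those $w = x + i\omega$ for which $(x,-\omega) \in X$.

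The single point that requires care is the reflection $\omega \mapsto -\omega$ built into this identity, since it replaces the sampling set by its complex conjugate. Identifying $\R^2$ with $\C$ through $(x,\omega) \mapsto x+i\omega$, the condition $(x,-\omega)\in X$ reads $\overline{w} \in X$, that is $w \in \overline{X}$. As complex conjugation maps the line $e^{i\alpha_j}\R$ onto $e^{-i\alpha_j}\R$, we get $\overline{X} = e^{-i\alpha_1}\R \cup e^{-i\alpha_2}\R$, again a union of two lines through the origin. Their angular difference equals $-(\alpha_1 - \alpha_2)$, and since $\pi\Q$ is closed under negation the hypothesis $\alpha_1 - \alpha_2 \notin \pi\Q$ gives $(-\alpha_1)-(-\alpha_2) \notin \pi\Q$ as well. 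Consequently $Bf$ and $Bh$ have the same modulus on a set of exactly the type demanded by Jaming's theorem.

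It then remains to combine this with the two imported facts. Both $Bf$ and $Bh$ are entire of order at most two by \cite[Theorem 2]{BrianCHall}, hence of finite order, so \cite[Theorem 3.3]{Jaming} applies to them on $\overline{X}$ and produces a constant $\tau \in \T$ with $Bf = \tau Bh$. Finally, because the Bargmann transform is a unitary isomorphism of $\lt$ onto the Bargmann--Fock space, and in particular injective and linear, the equality $Bf = \tau\,Bh = B(\tau h)$ forces $f = \tau h$, which is precisely condition \textbf{(U)} for the pair $(\lt, X)$. I do not anticipate a genuine difficulty here: the argument is a reduction, and the only step needing attention is the conjugation bookkeeping above, which must be settled before Jaming's theorem can legitimately be invoked.
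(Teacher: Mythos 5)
Your argument is correct and follows exactly the route the paper takes: it states Theorem \ref{thm:jaming} as a direct consequence of Jaming's theorem via the relation $\mathcal{G}f(x,-\omega)=e^{\pi i x\omega}Bf(z)e^{-\pi|z|^2/2}$ and the finite order of the Bargmann transform. Your careful handling of the reflection (replacing $X$ by $\overline{X}$, still a union of two lines with angular difference outside $\pi\Q$) is precisely the bookkeeping the paper leaves implicit in the phrase ``except for a non-zero weighting factor and a reflection.''
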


Jaming's result is based on Hadamard's factorization theorem and its proof demonstrates that the two lines $e^{i\alpha_1} \R, e^{i\alpha_2}\R$ can be replaced by sets $A \subseteq e^{i\alpha_1} \R, B \subseteq e^{i\alpha_2} \R$ where $A,B$ are sets of uniqueness of entire functions of finite order (e.g. $A$ and $B$ contain a limit point) \cite[Remark 3.4]{Jaming}. In this case, $(\lt, A \cup B)$ satisfies condition \textbf{(U)} as well.

\item \label{itm:pair3}
It follows readily from Theorem \ref{thm:jaming} that $(\lt, \mathcal{O})$ satisfies condition \textbf{(U)} whenever $\mathcal{O}$ is an open subset of $\R^2$. For if $\mathcal{O} \subseteq \R^2 \simeq \C$ is open then we can find two lines $e^{i\alpha_1} \R, e^{i\alpha_2}\R \subseteq \C$ satisfying $\alpha_1 - \alpha_2 \notin \pi\Q$ such that $A = e^{i\alpha_1} \R \cap \mathcal{O}$ and $B = e^{i\alpha_2} \R \cap \mathcal{O}$ contain a limit point.

\item \label{itm:pair4} An example where the sampling set $X$ is fully discrete can be found in \cite{AlaifariWellershoff}. In this article the authors show that if $\mathcal{C} = PW_b^2(\R,\R)$ is the class of real-valued Paley-Wiener functions,
\begin{equation}\label{pw}
PW_b^2(\R, \R) = \{ f \in L^2(\R,\R) : \supp(\hat{f}) \subseteq [-\tfrac{b}{2}, \tfrac{b}{2}] \},
\end{equation}
then the choice $X = (2b)^{-1}\Z \times \{ 0 \}$ implies that $(\mathcal{C}, X)$ satisfies condition \textbf{(U)} \cite[Theorem 2.5]{AlaifariWellershoff}. We will return to this example in Section \ref{sec:main} when we prove uniqueness results for compactly-supported, complex-valued signals.
\end{enumerate}

To summarize, \ref{itm:pair1}, \ref{itm:pair2} and $\ref{itm:pair3}$ above consider non-discrete sampling sets. In practice, however, the spectrogram can only be sampled on a discrete set $X$. 
The only result applying to this case is \ref{itm:pair4} which is, however, only valid for the restricted class of real-valued and band-limited signals, whereas applications in diffraction imaging would rather require results for more general function classes. 
In particular, it is unknown which classes of complex-valued functions (if any) can be uniquely recovered from discrete samples $X \subseteq \R^2$ of their spectrograms. In the present article we are primarily interested in the setting where $X$ is a lattice in the time-frequency plane, i.e. $X=A\Z^2$ with $A \in \mathrm{GL}_2(\R)$ an invertible matrix. At this juncture, we emphasize that if $X$ is a lattice then a prior restriction of the signal class $\mathcal{C}$ to a proper subspace of $\lt$ is necessary. This follows from a recent result which states that uniqueness in $\mathcal{C} = \lt$ is never achieved if $X$ is a lattice, even if one has the freedom of changing the window function $\varphi$ of the short-time Fourier transform \cite[Theorem 1.2]{grohsLiehr3}.

\begin{theorem}
Let $0 \neq g \in \lt$ be a window function and $$V_gf(x,\omega) = \int_\R f(t)\overline{g(t-x)}e^{-2\pi i \omega t} \, dt$$ the short-time Fourier transform of $f$ with respect to the window $g$. For every lattice $X = A\Z^2, A \in \mathrm{GL}_2(\R)$, there exists $f_1,f_2 \in \lt$ such that
\begin{enumerate}
    \item $|V_gf_1(z)| = |V_gf_2(z)|$ for every $z \in X$ and
    \item $f_1$ and $f_2$ do not agree up to a global phase.
\end{enumerate}
\end{theorem}

\subsection{Contributions}
We significantly improve on previous work by adding two more realistic examples of pairs $(\mathcal{C},X)$ which satisfy condition $(\textbf{U})$. The first result concerns compactly-supported functions. This is a natural and common signal assumption made in a variety of imaging applications such as ptychography \cite{111,222,333,444}. The goal in these applications is to reconstruct a finite object from spectrogram samples. The object is modeled as a bounded function having compact support. The following uniqueness result from lattice samples holds in this case.

\begin{theorem}
Let $\mathcal{C}=L^4[-\tfrac{c}{2},\tfrac{c}{2}]$ and let $b>0$ be an arbitrary positive constant. If $$X=\frac{1}{b}\Z\times \frac{1}{2c}\Z$$ then $(\mathcal{C},X)$ satisfies condition $(\mathbf{U})$.
\end{theorem}
\begin{proof}
See Theorem \ref{generalization}.
\end{proof}

The proof of the previous theorem uses Shannon's sampling theorem to sample in frequency space as well as Zalik's theorem (Theorem \ref{thm:zalik}) on the completeness of discrete Gaussian translates. Our second main result, Theorem \ref{main_theorem2}, provides a uniqueness theorem for Gaussian shift-invariant spaces $V_\beta^1(\varphi)$,
$$
V_\beta^1(\varphi) \coloneqq \left \{ f \in L^1(\R) : f = \sum_{k \in \Z} c_k \varphi(\cdot - \beta k), \ c \in \ell^1(\Z) \right \}.
$$

\begin{theorem}
Let $\beta \in \R_{>0}\setminus \Q_{>0}$ and $\mathcal{C} = V_\beta^1(\varphi)$. Further, let $\varepsilon>0$ be an arbitrary positive constant. If
$$
X=\frac{\beta}{2+\varepsilon} \Z\times \Z
$$
then $(\mathcal{C},X)$ satisfies condition $(\mathbf{U})$.
\end{theorem}
\begin{proof}
See Theorem \ref{main_theorem2}.
\end{proof}

The proof of the previous theorem makes use of recent uniqueness results for shift-invariant spaces \cite{GroechenigRomeroStoeckler} as well as Weyl's equidistribution theorem. Observe that in this result the elements of the signal class $\mathcal{C}$ are neither compactly-supported nor band-limited or real-valued which thus represents a new level of generality compared to previous results. In addition, a uniqueness result which holds for arbitrary $\beta>0$ will be presented. We further show that the obtained uniqueness results for shift-invariant spaces are, to a certain degree, sharp. Moreover, we generalized to non-uniform sampling as well as phase-space rotations (i.e. fractional Fourier transforms) of the considered signal classes. 
Finally, all results will be compared in detail to the setting where the considered signals (resp. their Fourier transform) are assumed to be real-valued. The striking observation is that sampling on a 1-dimensional lattice suffices to obtain uniqueness provided that a real-valuedness property holds. If the 1-dimensional lattice gets extended to a 2-dimensional lattice then uniqueness is achieved for general complex-valued maps. Formally, we have the following statement.

\begin{theorem}
Let $\varepsilon,\beta,b,c>0$ and define the four spaces
\begin{equation*}
    \begin{split}
        \mathcal{C}_\R^1 & = \{ f \in L^4[-\tfrac{c}{2},\tfrac{c}{2}] : \ft f \ \text{is real-valued} \, \}, \\
        \mathcal{C}_\C^1 & = L^4[-\tfrac{c}{2},\tfrac{c}{2}], \\
        \mathcal{C}_\R^2 & = \{ f \in V_\beta^1(\varphi) : f \ \text{is real-valued} \, \}, \\
        \mathcal{C}_\C^2 & = V_\beta^1(\varphi).
    \end{split}
\end{equation*}
Then the following holds:
\begin{enumerate}
    \item if $X_\R^1 = \{ 0 \} \times \tfrac{1}{2c}\Z$ then $(\mathcal{C}_\R^1, X_\R^1)$ satisfies condition \textbf{(U)},
    \item if the 1-dimensional lattice $X_\R^1$ gets extended to the 2-dimensional lattice $X^1_\C = \tfrac{1}{b}\Z \times \tfrac{1}{2c}\Z$ then $(\mathcal{C}_\C^1, X_\C^1)$ satisfies condition \textbf{(U)},
    \item if $X_\R^2 = \tfrac{2}{\beta + \varepsilon} \Z \times \{ 0 \}$ then $(\mathcal{C}_\R^2, X_\R^2)$ satisfies condition \textbf{(U)},
    \item if the 1-dimensional lattice $X_\R^2$ gets extended to the 2-dimensional lattice $X^2_\C = \tfrac{2}{\beta + \varepsilon}\Z \times \Z$ then $(\mathcal{C}_\C^2, X_\C^2)$ satisfies condition \textbf{(U)}.
\end{enumerate}
\end{theorem}
\begin{proof}
Statement 1 and 2 will be shown in Theorem \ref{thm:rc_c_1}. Statement 3 and 4 are the content of Theorem \ref{thm:rc_c_2}.
\end{proof}

\section{Preliminaries}\label{sec:basicresults}
This section collects several preliminary results which will be used throughout the paper. In Section \ref{sec:sampling} we provide uniqueness results for Paley-Wiener and shift-invariant spaces, Section \ref{sec:muentz} discusses density results for translates of functions, and Section \ref{sec:frft} collects basic properties of the fractional Fourier transform.

\subsection{Uniqueness sets}\label{sec:sampling}

Our uniqueness results make use of uniqueness theorems for Paley-Wiener spaces and shift-invariant spaces with Gaussian generator. The notion of a uniqueness set is defined as follows.

\begin{definition}
Let $Q \subseteq C(\R)$ be a class of real- or complex-valued, continuous functions. A set $\Lambda \subseteq \R$ is called uniqueness set for $Q$ if the implication
$$
\left ( f,h \in Q : f(\lambda) = h(\lambda) \  \forall \lambda \in \Lambda \right ) \implies f(t)=h(t) \ \forall t \in \R
$$
is satisfied.
\end{definition}

Sufficient conditions on a set $\Lambda$ to be a uniqueness set for Paley-Wiener spaces and shift-invariant spaces are given by means of their effective Beurling-Malliavin density and lower Beurling density, respectively. In the following we give a brief summary of the results relevant for the upcoming proofs.

\subsubsection{Paley-Wiener spaces}\label{subsubsection:pw}

For $b > 0$ we denote by $PW_b^2(\R)$ the Paley-Wiener space which is defined by
$$
PW_b^2(\R) = \{ f \in L^2(\R) : \supp(\hat f) \subseteq [-\tfrac{b}{2}, \tfrac{b}{2}] \}.
$$
The classical sampling theorem of Shannon, Whittaker and Kotel'nikov implies that every $f \in PW_b^2(\R)$ is determined by its samples at $b^{-1}\Z$ \cite[Theorem 6.13]{Higgins}. In particular, $f$ can be recovered by the cardinal sine series
$$
f(t) = \sum_{n \in \Z} f \left( \frac{n}{b} \right) \mathrm{sinc}(bt-n)
$$
and $\Lambda = b^{-1}\Z$ is a uniqueness set for $PW_b^2(\R)$. Clearly, if $f \in PW_b^2(\R)$ then it follows from the representation
$$
f(x) = \int_{-b/2}^{b/2} \hat f(\omega)e^{2\pi i \omega x} \, d\omega
$$
that a set of real numbers $\Lambda \subseteq \R$ is a uniqueness set for $PW_b^2(\R)$ if and only if the complex exponentials
$$
\mathcal{E}_\Lambda = \{ e^{2\pi i\lambda x} : \lambda \in \Lambda \}
$$
are complete in $L^2[-\tfrac{b}{2},\tfrac{b}{2}]$. Determining assumptions on $\Lambda$ which imply completeness of $\mathcal{E}_\Lambda$ is commonly known as the completeness problem of complex exponentials. In this context, a quantity of interest is the radius of completeness of $\Lambda$ which is defined by
$$
R(\Lambda) = \sup \{ a \geq 0 : \mathcal{E}_\Lambda \text{ is complete in } L^2[-\tfrac{a}{2},\tfrac{a}{2}] \}.
$$
A formula for the radius of completeness in terms of a suitable density was derived by Beurling and Malliavin \cite{beurling1962, beurling1967}. To state the corresponding theorem, we start by defining the so-called effective Beurling-Malliavin density. Several equivalent definitions of this density are known. We follow the approach given in \cite[Chapter 5]{Poltoratski}.
For an interval $I \subseteq \R$ denote by $|I|$ its length and by $\mathrm{dist}(0,I)$ its distance to the origin. A sequence $\{ I_n : n \in \Z \}$ of disjoint intervals on $\R$ is called \emph{long} if
$$
\sum_{n \in \Z} \frac{|I_n|^2}{1+\mathrm{dist}^2(0,I_n)} = \infty.
$$
For a sequence $\Lambda \subseteq \R$ of real numbers, the \emph{effective Beurling-Malliavin density (effective BM density)} is defined as
$$
D^*(\Lambda) \coloneqq \sup \{ d \geq 0 : \exists \text{ long } (I_n)_n \text{ such that } \#(\Lambda \cap I_n) \geq d|I_n| \ \forall n \},
$$
where $\#(\Lambda \cap I_n)$ denotes the number of elements in $\Lambda \cap I_n$.
The Beurling-Malliavin theorem states that $D^*(\Lambda)$ coincides with $R(\Lambda)$ \cite[Theorem 26]{Poltoratski}.

\begin{theorem}[Beurling-Malliavin]\label{sampling_pw}
If $\Lambda$ is a discrete real sequence then $R(\Lambda) = D^*(\Lambda).$ Consequently, if $a < D^*(\Lambda) < b$ for some $0<a<b$ then $\mathcal{E}_\Lambda$ is complete in $L^2[-\tfrac{a}{2},\tfrac{a}{2}]$ and incomplete in $L^2[-\tfrac{b}{2},\tfrac{b}{2}]$.
\end{theorem}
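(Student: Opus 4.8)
The identity $R(\Lambda)=D^*(\Lambda)$ is the Beurling--Malliavin density theorem, arguably the deepest result in the completeness theory of complex exponentials, and I would not attempt an independent proof; the plan is to invoke it directly in the form \cite[Theorem 26]{Poltoratski} and to spend my own effort only on deducing the stated consequence. For orientation let me nonetheless record the architecture of the cited theorem and locate its essential difficulty. By duality, $\mathcal{E}_\Lambda$ fails to be complete in $L^2[0,a]$ precisely when there is a nonzero $g\in L^2[0,a]$ with $\hat g(\lambda_n)=0$ for all $n$. The Fourier transform $G(z)=\int_0^a g(x)e^{-2\pi i xz}\,dx$ of such a $g$ is a nonzero entire function of exponential type $2\pi a$ that is square-integrable on $\R$, i.e.\ a Paley--Wiener function whose zero set contains $\Lambda$. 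Thus $R(\Lambda)$ is exactly the analytic threshold separating the values of $a$ for which $\Lambda$ cannot lie in the zero set of such a function from those for which it can, and the whole content of the theorem is that this analytic threshold is computed by the purely metric quantity $D^*(\Lambda)$ built from long interval sequences.

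The two resulting inequalities are of very different character. The bound $R(\Lambda)\ge D^*(\Lambda)$ is the more classical one: for $a<D^*(\Lambda)$ the sequence $\Lambda$ is, along a long family of intervals, too dense to be contained in the zero set of a nonzero Paley--Wiener function of type $2\pi a$, so a Jensen/zero-counting estimate forces $\mathcal{E}_\Lambda$ to be complete in $L^2[0,a]$. The reverse bound $R(\Lambda)\le D^*(\Lambda)$ is the deep one: for $a>D^*(\Lambda)$ one must \emph{construct} a nonzero Paley--Wiener function of type $2\pi a$ vanishing on $\Lambda$, which is achieved by forming a canonical product over $\Lambda$ and correcting its growth on the real axis. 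That correction relies on the Beurling--Malliavin multiplier theorem, supplying nonzero entire functions of arbitrarily small exponential type dominated by a prescribed majorant under a logarithmic-integrability condition. This multiplier machinery, together with the delicate passage between long interval sequences and admissible majorants, is the step I expect to be the main obstacle, and it is precisely the reason a self-contained proof is impractical here.

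It then remains to deduce the ``Consequently'' clause from the identity, and this part I would carry out in full. First, completeness of $\mathcal{E}_\Lambda$ is invariant under recentering the interval: the isometry $g\mapsto g(\cdot-a/2)$ from $L^2[-a/2,a/2]$ onto $L^2[0,a]$ transforms orthogonality to $e^{2\pi i\lambda_n x}$ into orthogonality to $e^{2\pi i\lambda_n x}$ up to the unimodular factor $e^{-\pi i\lambda_n a}$, so $\mathcal{E}_\Lambda$ is complete in $L^2[-a/2,a/2]$ if and only if it is complete in $L^2[0,a]$. Second, completeness is monotone in interval length: if $\mathcal{E}_\Lambda$ is complete in $L^2[0,a]$ and $a'\le a$, then any $g\in L^2[0,a']$ orthogonal to every $e^{2\pi i\lambda_n x}$ extends by zero to an element of $L^2[0,a]$ with the same orthogonality, hence vanishes, giving completeness in $L^2[0,a']$. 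Consequently $\mathcal{E}_\Lambda$ is complete in $L^2[0,a]$ for every $a<R(\Lambda)$ and incomplete for every $a>R(\Lambda)$. Given $a<D^*(\Lambda)<b$ and the identity $R(\Lambda)=D^*(\Lambda)$, the first observation applied at length $a$ yields completeness in $L^2[-a/2,a/2]$, while applied at length $b$ it yields incompleteness in $L^2[-b/2,b/2]$, as claimed.
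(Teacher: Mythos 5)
Your proposal is correct and matches the paper's treatment: the paper likewise states the identity $R(\Lambda)=D^*(\Lambda)$ as a citation of \cite[Theorem 26]{Poltoratski} without proof, and the ``Consequently'' clause is exactly the elementary deduction you supply (translation invariance of completeness of $\mathcal{E}_\Lambda$ under recentering the interval, monotonicity of completeness in the interval length, and the definition of $R(\Lambda)$ as a supremum). Your filled-in deduction is sound and slightly more explicit than what the paper leaves implicit.
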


\subsubsection{Shift-invariant spaces}

Recent developments in sampling theory due to Gröchenig, Romero and Stöckler show that assumptions on a suitable density of a sequence can be used to obtain uniqueness and sampling results for shift-invariant spaces with totally positive generators of Gaussian type \cite{GroechenigRomeroStoeckler}. We start by defining shift-invariant spaces. Let $1 \leq p \leq \infty$ and let $\beta > 0$ be a step size. The shift-invariant space $V_\beta^p(\varphi)$, generated by the Gaussian $\varphi(t) = e^{-\pi t^2}$, is defined by
\begin{equation}
V_\beta^p(\varphi) \coloneqq \{ f \in L^p(\R) : f = \sum_{k \in \Z} c_k \varphi(\cdot - \beta k), \ c \in \ell^p(\Z) \}.
\end{equation}
Note that functions in $V_\beta^p(\varphi)$ are continuous and if $1\leq p \leq q \leq \infty$ then one has the inclusion
$
V_\beta^1(\varphi) \subseteq V_\beta^p(\varphi) \subseteq V_\beta^q(\varphi)
$
\cite[Corollary 2.5]{GroechenigSurvey}.
Clearly, shift-invariant spaces can be defined in a more general fashion by replacing the Gaussian generator $\varphi$ with a different generator $g$. In particular, the choice $g(t) = \mathrm{sinc}(bt)$ for some $b>0$ implies that $V_{1/b}^2(g) = PW_b^2(\R)$ and we are in the situation of Section \ref{subsubsection:pw}. For a general introduction to shift-invariant spaces we refer to the survey \cite{ron_2001}. 
In order to obtain uniqueness results for shift-invariant spaces, we introduce the \emph{lower Beurling density}.
If $\Lambda \subseteq \R$ then the lower Beurling density of $\Lambda$ is defined by
$$
D^-(\Lambda) \coloneqq \liminf_{r \to \infty} \left( \inf_{a\in \R} \frac{\#(\Lambda \cap [a,a+r))}{r} \right).
$$
In the situation where the generator is the Gaussian $\varphi$, we have the following uniqueness theorem \cite[Theorem 4.4]{GroechenigRomeroStoeckler}.

\begin{theorem}\label{sampling_si}
If $\Lambda \subseteq \R$ has lower Beurling density $D^-(\Lambda) > \beta^{-1}$ then $\Lambda$ is a uniqueness set for $V_\beta^\infty(\varphi)$.
\end{theorem}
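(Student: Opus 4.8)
The plan is to argue by contraposition and to reduce the statement to an upper bound on the density of the zero set of a nonzero element of the space. Writing $Z(f) = \{ t \in \R : f(t) = 0 \}$, I would first establish that every nonzero \emph{real-valued} $f \in V_\beta^\infty(\varphi)$ satisfies $D^+(Z(f)) \le \beta^{-1}$, where $D^+$ is the upper Beurling density. Granting this, the theorem follows quickly, including for complex coefficients: if $f = \sum_k c_k \varphi(\cdot - \beta k)$ vanishes on all of $\Lambda$, then, since $\varphi$ is real-valued, both real functions $\Re f = \sum_k (\Re c_k)\varphi(\cdot-\beta k)$ and $\Im f = \sum_k (\Im c_k)\varphi(\cdot-\beta k)$ vanish on $\Lambda$, so $\Lambda \subseteq Z(\Re f)$ and $\Lambda \subseteq Z(\Im f)$. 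As $D^-(\Lambda) \le D^+(\Lambda) \le D^+(Z(\Re f)) \le \beta^{-1}$ would contradict $D^-(\Lambda) > \beta^{-1}$ unless $\Re f \equiv 0$, we get $\Re f \equiv 0$, and likewise $\Im f \equiv 0$, whence $f \equiv 0$. Everything therefore reduces to the real-coefficient zero-density estimate.

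The main step is this density bound, and here the \emph{total positivity} of the Gaussian enters decisively. I would exploit that the kernel $(t,k) \mapsto \varphi(t-\beta k)$ is totally positive, so that $c \mapsto \sum_k c_k \varphi(\cdot-\beta k)$ is \emph{variation diminishing}: for a real sequence $c$ the number of zeros of $f$ is controlled by the number $S^-(c)$ of sign changes of $c$. Because the centers $\beta k$ are $\beta$-separated, any window $[a,a+r)$ on the real axis effectively sees only about $r/\beta$ coefficients, so a localized form of the variation-diminishing inequality should give at most $r/\beta + O(1)$ zeros of $f$ in $[a,a+r)$, i.e. $D^+(Z(f)) \le \beta^{-1}$.

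To make the localization transparent, and to pin down the sharp constant, I would pass to the entire extension $F(z) = \sum_k c_k e^{-\pi(z-\beta k)^2}$ and factor out the non-vanishing Gaussian as $F(z) = e^{-\pi z^2} G(e^{2\pi\beta z})$, where $G(w) = \sum_k \bigl(c_k e^{-\pi\beta^2 k^2}\bigr) w^k$. The super-exponential decay of $e^{-\pi\beta^2 k^2}$ makes $G$ a theta-type Laurent series, analytic on $\C\setminus\{0\}$, and the real zeros of $f$ correspond exactly to the positive real zeros of $G$. Writing $w = e^{s}$ turns $G$ into the exponential sum $\sum_k a_k e^{ks}$ with $a_k = c_k e^{-\pi\beta^2 k^2}$; since $(e^{ks})_k$ is a Chebyshev system (again a manifestation of total positivity), the number of real zeros of this sum is bounded by the number of sign changes of $(a_k)$, which equals $S^-(c)$. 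The Gaussian weight localizes the sum: for $t$ in a bounded range the terms concentrate at $k \approx t/\beta$, so over $t \in [0,T]$ the relevant indices fill a window of width $\approx T/\beta$, giving at most $T/\beta + O(1)$ zeros and hence density $\beta^{-1}$.

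The part I expect to be genuinely hard is precisely this localization together with the \emph{sharp} constant. A crude complex-analytic count via Jensen's formula applied to $G$ yields only $D^+(Z(f)) \le 2\beta^{-1}$, which is off by a factor of two and too weak to meet the hypothesis $D^-(\Lambda) > \beta^{-1}$; extracting the optimal $\beta^{-1}$ appears to require the full variation-diminishing structure of totally positive functions of Gaussian type rather than generic growth estimates. Controlling the error when truncating the bilateral sum, and verifying that the discarded tails neither create nor destroy zeros in a way that corrupts the count, is the technical heart of the argument.
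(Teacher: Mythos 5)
First, a point of reference: the paper does not prove this statement at all --- it is imported verbatim from Gr\"ochenig, Romero and St\"ockler \cite[Theorem 4.4]{GroechenigRomeroStoeckler}, so there is no in-paper proof to compare against. Measured against the actual proof in that reference, your strategy is the right one and essentially the same: reduce to real coefficient sequences (this step of yours is completely correct, since $\varphi$ is real-valued and $\Re c,\Im c\in\ell^\infty$), and then contradict the density hypothesis by bounding the density of the zero set of a nonzero real element of $V_\beta^\infty(\varphi)$ via total positivity. The factorization $F(z)=e^{-\pi z^2}G(e^{2\pi\beta z})$ with $G(w)=\sum_k c_k e^{-\pi\beta^2k^2}w^k$ is also correct and is exactly the mechanism by which the Gaussian case reduces to zero counts for exponential sums.

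The genuine gap is that the decisive lemma --- every nonzero real $f\in V_\beta^\infty(\varphi)$ satisfies $\#(Z(f)\cap[a,a+r])\le r/\beta+O(1)$ uniformly in $a$, hence $D^+(Z(f))\le\beta^{-1}$ --- is asserted and motivated but not proved, and this lemma \emph{is} the theorem. Two specific difficulties are left unresolved. (i) The Descartes-type bound ``number of real zeros of $\sum_k a_ke^{ks}$ is at most $S^-(a)$'' is a statement about \emph{finite} exponential sums (Chebyshev systems have finite dimension); for $c\in\ell^\infty$ the coefficient sequence may have infinitely many sign changes, so the global variation-diminishing inequality is vacuous and only a localized version can work. (ii) The localization requires truncating the bilateral sum to the $\approx r/\beta$ indices near the window and showing the discarded Gaussian tails do not create extra zeros; this needs a quantitative lower bound on the truncated sum away from its zeros (a Rouch\'e-type perturbation), which is not supplied and is not routine --- you correctly observe that generic growth estimates (Jensen) lose a factor of $2$ and are therefore useless here. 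In Gr\"ochenig--Romero--St\"ockler this is precisely the technical core, handled through Schoenberg's theory of totally positive bi-infinite matrices rather than by naive truncation. As it stands, your text is an accurate road map of the known proof with its hardest segment marked ``here be dragons,'' not a proof.
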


\subsection{Müntz-Szász type approximations}\label{sec:muentz}

We aim at combining the above-stated uniqueness theorems with results concerning the density of discrete translates which are based on the classical Müntz-Szász theorem \cite[Theorem 6.1]{LuxemburgKorevaar}.

\begin{theorem}[Müntz-Szász]
Let $[a,b]$ be a compact interval such that $a>0$ and let $\Lambda \subseteq \R$ be a sequence of distinct real numbers. Let $Q$ be one of the spaces $C[a,b]$ or $L^p[a,b]$ with $1\leq p < \infty$. Then the sequence of powers $\{ p_\lambda : \lambda \in \Lambda \}, p_\lambda(t)=t^\lambda,$ is complete in $Q$ if and only if
$$
\sum_{\lambda \in \Lambda}^{}{}^{'} \frac{1}{|\lambda|} = \infty,
$$
where the symbol $\sum'$ indicates that the sum runs over all terms with nonvanishing denominator.
\end{theorem}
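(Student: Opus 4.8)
The plan is to prove both directions of the equivalence by a duality argument that converts completeness of the powers into a uniqueness statement for the zeros of an associated analytic function, and then to recognize the divergence condition $\sum' 1/\lambda_n = \infty$ as the failure of a Blaschke condition in a half-plane. First I would set up the duality. By the Hahn-Banach theorem, $\{t^{\lambda_n}\}$ is complete in $Q$ (either $C[a,b]$ or $L^p[a,b]$, $1 \le p < \infty$) if and only if every continuous linear functional annihilating all of the powers is trivial. By the Riesz representation theorem for $C[a,b]$, respectively $L^p$-$L^q$ duality with $1/p + 1/q = 1$ for $L^p[a,b]$, such a functional is a finite complex Borel measure $\mu$ on $[a,b]$, respectively $d\mu = g\,dt$ with $g \in L^q[a,b]$. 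Thus completeness is equivalent to the assertion that the only $\mu$ with $\int_a^b t^{\lambda_n}\,d\mu(t) = 0$ for all $n$ is $\mu = 0$ (equivalently $g = 0$ a.e.).

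Next I would associate to $\mu$ the function $F(z) = \int_a^b t^z\,d\mu(t)$. Because $a > 0$, the map $t \mapsto \log t$ takes values in the bounded interval $[\log a, \log b]$, so writing $t^z = e^{z \log t}$ shows that $F$ is entire of exponential type and bounded on vertical lines. After the substitution $t = b e^{-s}$, $s \in [0,L]$ with $L = \log(b/a)$, one obtains $F(z) = b^z G(z)$, where $G(z) = \int_0^L e^{-zs}\,d\tilde\mu(s)$ is the Laplace transform of the push-forward measure $\tilde\mu$ supported on $[0,L]$. Since $b^z$ never vanishes, $F$ and $G$ share the same zero set, which by hypothesis contains every $\lambda_n$. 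The structural facts I would exploit are that $G$ is bounded and analytic on the right half-plane $\{\Re z > 0\}$ (as $|e^{-zs}| \le 1$ there for $s \ge 0$), that $e^{zL}G(z)$ is bounded and analytic on the left half-plane, and that $G$ determines $\tilde\mu$ uniquely by injectivity of the Laplace transform.

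Then I would invoke the theory of zero sets. If $\{\lambda_n\}$ has a finite accumulation point, then $F$ vanishes on a set with a limit point, so $F \equiv 0$ by the identity theorem and completeness is immediate (and $\sum' 1/\lambda_n$ diverges); hence one may assume $|\lambda_n| \to \infty$. In that regime the Blaschke condition for bounded analytic functions on a half-plane applies: a nonzero $G \in H^\infty(\{\Re z > 0\})$ has zeros $\{z_k\}$ satisfying $\sum_k \frac{\Re z_k}{1 + |z_k|^2} < \infty$. For positive exponents this sum is comparable to $\sum_{\lambda_n > 0} 1/\lambda_n$, while the analogous argument applied to $e^{zL}G$ on the left half-plane governs the negative exponents through $\sum_{\lambda_n < 0} 1/|\lambda_n|$. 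Consequently, divergence of $\sum' 1/\lambda_n$ forces the relevant one-sided Blaschke sum to diverge, so $G \equiv 0$, whence $\mu = 0$ and the powers are complete. For the converse, when $\sum' 1/\lambda_n < \infty$ the exponents are sparse enough that a genus-zero canonical product $\prod_n (1 - z/\lambda_n)$ converges to an entire function of exponential type zero; multiplying by a fixed factor of exponential type $L$ that is bounded on $\R$ places the result in the Paley-Wiener class of Laplace transforms of $L^q$-densities on $[0,L]$, producing a nonzero annihilator and hence incompleteness.

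The step I expect to be the main obstacle is the converse direction: one must construct the annihilator so that it genuinely lies in the precise Paley-Wiener/Hardy class dictated by $Q$ (an $L^q$-density on $[a,b]$, or a finite measure in the $C[a,b]$ case), not merely as an abstract entire function with the prescribed zeros. This requires controlling the exponential type to be exactly $\le L$ together with the decay on the real axis needed for membership $g \in L^q$. A secondary technical point is the careful treatment of possibly negative exponents: the positive and negative parts must be handled in the two respective half-planes, and one must check that divergence of $\sum' 1/\lambda_n$ entails divergence of a one-sided sum comparable to $\sum 1/|\lambda_n|$ so that the uniqueness conclusion actually applies.
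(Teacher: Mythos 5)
The paper does not prove this theorem; it is quoted from Luxemburg and Korevaar \cite{LuxemburgKorevaar}, so there is no internal proof to compare your attempt against. Judged on its own merits, your sufficiency direction is essentially the standard (and correct) argument: Hahn--Banach plus Riesz/$L^p$--$L^q$ duality reduces completeness to a uniqueness statement for $F(z)=\int_a^b t^z\,d\mu(t)$, the substitution $t=be^{-s}$ turns $F$ into a Laplace transform over $[0,L]$ with $L=\log(b/a)$, and the half-plane Blaschke condition (applied to $G$ on $\{\Re z>0\}$ for the positive exponents and to $e^{Lz}G$ on $\{\Re z<0\}$ for the negative ones) forces $G\equiv 0$ when a one-sided reciprocal sum diverges. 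You are also right to flag the discrepancy between $\sum' 1/\lambda_n$ and $\sum' 1/|\lambda_n|$: for exponents of both signs the signed sum is not the right quantity, the correct hypothesis is divergence of $\sum'|\lambda_n|^{-1}$ (which is in fact how the paper states Zalik's theorem), and that is exactly what makes the one-sided Blaschke argument applicable.

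The converse, however, contains a genuine gap beyond the one you flagged. Your construction takes the genus-zero canonical product $P(z)=\prod_n(1-z/\lambda_n)$, which is indeed entire of exponential type zero when $\sum'|\lambda_n|^{-1}<\infty$, and multiplies it by a fixed factor of type $L$ that is bounded on $\R$. This cannot work: type zero does not give boundedness, or even polynomial boundedness, on \emph{vertical} lines, which is where membership in the relevant Paley--Wiener/Hardy class is tested. For example, with $\lambda_n=n^2$ one has $\log|P(iy)|\asymp\sqrt{|y|}$, so $P$ grows faster than any polynomial on $i\R$ and no fixed multiplier of exponential type $\le L$ that is merely bounded on the real axis will bring $P$ into the class of Laplace transforms of $L^q$-densities on $[0,L]$. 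The standard repair is to abandon the canonical product in favour of the half-plane Blaschke products $\prod_n\frac{\lambda_n-z}{\lambda_n+z}$ (bounded by $1$ on the half-plane), divided by a polynomial such as $(1+z)^2$ to gain integrability on vertical lines, together with a two-sided Paley--Wiener argument to force the support of the resulting density into $[0,L]$ rather than $[0,\infty)$; handling exponents of both signs requires doing this in both half-planes simultaneously and controlling the indicator diagram. This is precisely the technical content of the Luxemburg--Korevaar paper, and as written your sketch does not supply it.
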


The Müntz-Szász theorem can be used to investigate under which assumptions a function space can be spanned by discrete translates of a single function $\psi$. Recall that for $\lambda \in \R$ the shift-operator $T_\lambda$ is defined by $T_\lambda\psi = \psi(\cdot - \lambda)$.

\begin{definition}
Let $I \subseteq \R$ be an interval and let $\Lambda \subseteq \R$ be a set of real numbers. We say that a function $\psi \in \lt$ is a $\Lambda$-generator for $L^2(I)$ if 
$$
\lspan \{ T_\lambda \psi : \lambda \in \Lambda \}
$$
is dense in $L^2(I)$.
\end{definition}

For instance, Wiener's Tauberian theorem \cite[Theorem 1]{Wiener} implies that every $\psi$ which has an a.e. non-vanishing Fourier transform is an $\R$-generator for $\lt$. It is natural to ask if $\Lambda$ can be chosen to be discrete. It then depends on $\Lambda$ and the topology of $I$ whether or not there exists a function that generates $L^2(I)$. For example, there exists no $\Z$-generator for $\lt$ but an arbitrary perturbation of $\Z$ of the form
$
\Lambda = \{ n + a_n : a_n \neq 0, a_n \to 0 \}
$
admits a $\Lambda$-generator for $\lt$ \cite{Olevskii}. If $\psi$ is a Gaussian then the following result due to Zalik provides a Müntz-type condition on $\Lambda$ such that $\psi$ is a $\Lambda$-generator for $L^2(I)$ whenever $I$ is a compact interval \cite[Theorem 4]{Zalik}.

\begin{theorem}[Zalik]\label{thm:zalik}
Let $I \subseteq \R$ be a compact interval and let $\psi(t)=ae^{-b^2(t-c)^2}$ be a Gaussian, $a,b >0, c \in \R$. Further, let $\Lambda \subseteq \R$ be a sequence of distinct real numbers. Then $\psi$ is a $\Lambda$-generator for $L^2(I)$ if and only if the series $\sum'_{\lambda \in \Lambda} |\lambda|^{-1}$ diverges.
\end{theorem}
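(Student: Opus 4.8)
The plan is to reduce the density of Gaussian translates to the completeness of real powers on a compact interval bounded away from the origin, where the Müntz–Szász theorem applies directly. The starting point is the elementary identity obtained by expanding the shifted Gaussian: for $\lambda \in \R$,
$$
T_\lambda\psi(t) = a\,e^{-b^2(t-\lambda-c)^2} = \left(a\,e^{-b^2\lambda^2}\right) e^{-b^2(t-c)^2}\, e^{2b^2\lambda(t-c)}.
$$
The factor $a\,e^{-b^2\lambda^2}$ is a nonzero scalar and hence irrelevant for linear spans, while $w(t) = e^{-b^2(t-c)^2}$ is a fixed, $\lambda$-independent weight. On the compact interval $I$ the function $w$ is continuous and satisfies $0 < m \le w \le M < \infty$, so multiplication by $w$ is a boundedly invertible operator on $L^2(I)$ and therefore preserves closed linear spans. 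Thus $\psi$ is a $\Lambda$-generator for $L^2(I)$ if and only if $\lspan\{\, e^{2b^2\lambda_n(t-c)} : n \,\}$ is dense in $L^2(I)$.

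Next I would perform the substitution $x = e^{2b^2(t-c)}$, a $C^1$-diffeomorphism of $I=[p,q]$ onto the compact interval $[A,B]\subset(0,\infty)$ with $A=e^{2b^2(p-c)}$, $B=e^{2b^2(q-c)}$, under which $e^{2b^2\lambda(t-c)}$ becomes precisely $x^{\lambda}$. Because the induced change of measure $dt=(2b^2 x)^{-1}\,dx$ has Jacobian bounded above and below on $[A,B]$, the associated composition operator is a Banach-space isomorphism $L^2[A,B]\to L^2(I)$ carrying $x^{\lambda_n}$ to $e^{2b^2\lambda_n(t-c)}$. Consequently the generating property is equivalent to completeness of the power system $\{x^{\lambda_n}\}_n$ in $L^2[A,B]$, and since $A>0$ we are exactly in the regime of the Müntz–Szász theorem.

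It remains to convert the Müntz criterion into the asserted condition $\sum' |\lambda_n|^{-1}=\infty$. Since the stated form of the theorem concerns essentially nonnegative exponents, I would treat negative $\lambda_n$ via the reflection $x\mapsto AB/x$, which maps $[A,B]$ onto itself with bounded Jacobian and sends $x^{\mu}$ to a scalar multiple of $x^{-\mu}$; it is thus an isomorphism of $L^2[A,B]$ under which completeness of $\{x^{\lambda_n}\}$ is equivalent to completeness of $\{x^{-\lambda_n}\}$. For the direction \emph{divergence implies generator}, note that $\sum' |\lambda_n|^{-1}=\infty$ forces $\sum_{\lambda_n>0}\lambda_n^{-1}=\infty$ or $\sum_{\lambda_n<0}|\lambda_n|^{-1}=\infty$; in either case the Müntz–Szász theorem (applied directly, or after reflection) shows the corresponding subfamily of powers is already complete, and \emph{a fortiori} so is the full system.

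The main obstacle is the converse implication, namely that $\sum' |\lambda_n|^{-1}<\infty$ forces the translates to span a proper subspace. This cannot be obtained by the splitting above, since a union of two incomplete power systems may well be complete; instead one must exhibit a single nonzero annihilator. Concretely, I would seek $0\ne h\in L^2[A,B]$ with $\int_A^B h(x)\,x^{\lambda_n}\,dx=0$ for all $n$, which amounts to constructing an entire function $F(\lambda)=\int_A^B h(x)\,x^{\lambda}\,dx$ of the appropriate exponential type whose zero set contains $\Lambda$; the convergence $\sum' |\lambda_n|^{-1}<\infty$ is precisely what permits such a canonical-product construction. This is the content of the full two-sided Müntz–Szász theorem on an interval bounded away from zero, and the cleanest route is to invoke that theorem in its two-sided form on $[A,B]$ directly rather than to reassemble it from the one-sided statement.
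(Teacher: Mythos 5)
The paper does not prove this statement at all --- it is imported verbatim (``in a slightly modified way'') from Zalik \cite[Theorem 4]{Zalik} --- so there is no in-paper proof to compare against; your argument is in fact a correct reconstruction of Zalik's own reduction: complete the square to write $T_\lambda\psi$ as a nonzero constant times a fixed weight $w(t)=e^{-b^2(t-c)^2}$ times $e^{2b^2\lambda(t-c)}$, remove $w$ by a boundedly invertible multiplication, and convert exponentials to powers on an interval $[A,B]\subset(0,\infty)$, where the two-sided M\"untz--Sz\'asz theorem (the paper's own Theorem~\ref{sec:muentz}.1, i.e.\ the Luxemburg--Korevaar version for distinct real exponents on an interval bounded away from $0$) gives the equivalence. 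Your care about signed exponents is warranted but ultimately unnecessary if one reads the cited Luxemburg--Korevaar statement with the (correct) condition $\sum' |\lambda_n|^{-1}=\infty$ rather than the paper's literal $\sum' \lambda_n^{-1}=\infty$; with that reading, both directions, including the converse via a canonical-product annihilator, are exactly what that theorem supplies, and your reflection argument for the forward direction is a valid fallback. No gaps.
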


\subsection{Fractional Fourier transform}\label{sec:frft}

Let $\{ h_n : n \in \N_0 \} \subseteq \lt$ be the Hermite basis functions for $\lt$,
$$
h_n(t) = \frac{2^{1/4}}{\sqrt{n!}} \left( -\frac{1}{\sqrt{2\pi}} \right)^n e^{\pi t^2} \left( \frac{d}{dt} \right)^n e^{-2\pi t^2}.
$$
For $\theta \in \R$, the operator $\ft_\theta : \lt \to \lt$, defined by
$$
\ft_\theta f = \sum_{n \in \N_0} e^{-i\theta n} \langle f,h_n \rangle h_n,
$$
is called the fractional Fourier transform of order $\theta$. Denoting by  $u_\theta : \R \to \C$ the chirp modulation function $u_\theta(t) = e^{-i\pi|t|^2 \cot \theta}$ then the fractional Fourier transform satisfies the relation
$
\ft_\theta f(\omega) = c_\theta u_\theta(\omega) \ft(u_\theta f)(\tfrac{\omega}{\sin \theta})
$
where $c_\theta = \sqrt{1-i\cot\theta}$ is a normalization factor and $\ft : \lt \to \lt$ denotes the ordinary Fourier transform. If we define, in addition, the rotation matrix $R_\theta \in \R^{2 \times 2}$ by
\begin{equation}\label{eq:rotationmatrix}
    R_\theta \coloneqq \left( {\begin{array}{cc}
   \cos \theta & -\sin \theta \\
   \sin\theta & \cos\theta \\
  \end{array} } \right)
\end{equation}
then the operator $\ft_\theta$ has the following properties \cite{irarrazaval2011,Ozaktas2001}.

\begin{lemma}\label{lma:frac_properties}
Let $\varphi(t) = e^{-\pi t^2}$ be the standard Gaussian and let $\theta, \eta \in \R$. Then the fractional Fourier transform enjoys the following properties.

\begin{enumerate}
\item \label{itm:prop1} $\ft_\theta \varphi = \varphi$ for every $\theta \in \R$ whenever $\varphi$ is the Gaussian $\varphi(t) = e^{-\pi t^2}$
\item \label{itm:prop2} $\ft_\theta\ft_\eta = \ft_{\theta+\eta}$ for every $\theta,\eta \in \R$
\item \label{itm:prop3} $\ft_{\frac{\pi}{2}} = \ft$, $\ft_{-\frac{\pi}{2}} = \ft^{-1}$ and $\ft_0 = \mathrm{Id}$
\item \label{itm:prop4} $A(f,g)(R_\theta z) = A(\ft_\theta f, \ft_\theta g)(z)$ for every $z \in \R^2$, $\theta \in \R$ and every $f,g \in \lt$ where $A(f,g)$ denotes the cross-ambiguity function of $f$ and $g$,
$$
A(f,g)(x,\omega) = \int_\R f(t+\tfrac{x}{2})\overline{g(t-\tfrac{x}{2})}e^{-2\pi i t \omega} \, dt.
$$
\end{enumerate}
\end{lemma}

\section{Main results}\label{sec:main}
In this section, we prove the main results of the present paper. In Section \ref{sec:compsupp} we derive uniqueness results for compactly-supported functions followed by a comparison of our results to the real-valued setting. Section \ref{sec:SI} establishes analogous results for Gaussian shift-invariant spaces.
\subsection{Compactly-supported functions}\label{sec:compsupp}

For a function $h : \R \to \C$ and a value $\tau \in \R$ we introduce the shorthand notation
\begin{equation}\label{shorthand_notation}
h_\tau \coloneqq (T_\tau h)\overline{h},
\end{equation}
where $T_\tau$ is the translation operator.

\begin{theorem}\label{main_theorem}
Let $b,c>0$ and $\mathcal{C} = L^4[-\tfrac{c}{2},\tfrac{c}{2}]$. Then the choice $$X = \tfrac{1}{b}\Z \times \tfrac{1}{2c}\Z$$ implies that the pair $(\mathcal{C},X)$ satisfies condition \textbf{(U)}.
In other words, every $f \in L^4[-\tfrac{c}{2},\tfrac{c}{2}]$ is determined up to a global phase by the measurements
$$
\{ |\mathcal{G}f(x, \omega)| \, : \, (x,\omega) \in \tfrac{1}{b}\Z \times \tfrac{1}{2c}\Z \}.
$$
\end{theorem}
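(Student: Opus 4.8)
The plan is to decouple the two coordinates of the lattice and handle each by a one-dimensional sampling principle: Shannon's theorem in the frequency variable $\omega$, and Zalik's completeness theorem for Gaussian translates (Theorem~\ref{thm:zalik}) in the time variable $x$. Throughout, fix $f,h\in L^4[-c/2,c/2]$ with $|\mathcal Gf|=|\mathcal Gh|$ on $X=\Z\times(2c)^{-1}\Z$, and aim to produce a $\tau\in\T$ with $f=\tau h$.

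First I would read the spectrogram along each vertical line $x=\text{const}$. For fixed $x$, the function $\mathcal Gf(x,\cdot)$ is the Fourier transform of $g_x:=f\cdot T_x\varphi$, which is supported in $[-c/2,c/2]$; hence $|\mathcal Gf(x,\cdot)|^2$ is the Fourier transform of the autocorrelation $u\mapsto\int_\R g_x(t)\overline{g_x(t-u)}\,dt$, a continuous function supported in $[-c,c]$. Since $f\in L^4\subset L^2$ on a compact interval, $g_x\in L^2$, so this autocorrelation is bounded and compactly supported, whence $|\mathcal Gf(x,\cdot)|^2\in PW_{2c}^2(\R)$. By Shannon's theorem it is therefore determined by its samples on $(2c)^{-1}\Z$. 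Squaring the measurement identity and applying this for every $x\in\Z$ upgrades the hypothesis to $|\mathcal Gf(x,\cdot)|^2=|\mathcal Gh(x,\cdot)|^2$ on all of $\R$, and, after an inverse Fourier transform in $\omega$, to
\[
\int_\R f(t)\overline{f(t-u)}\,\varphi(t-x)\varphi(t-u-x)\,dt=\int_\R h(t)\overline{h(t-u)}\,\varphi(t-x)\varphi(t-u-x)\,dt
\]
for every $x\in\Z$ and every $u\in\R$.

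Next I would exploit the Gaussian structure. A direct computation gives the identity $\varphi(t-x)\varphi(t-u-x)=e^{-\pi u^2/2}\,e^{-2\pi(t-x-u/2)^2}$, so the equation above states, for each fixed $u$, that the two functions $t\mapsto f(t)\overline{f(t-u)}$ and $t\mapsto h(t)\overline{h(t-u)}$ have equal inner products against the translated Gaussian $e^{-2\pi(\,\cdot\,-\lambda)^2}$ for every $\lambda$ in the shifted integer lattice $\Z+u/2$. Their difference $\delta_u$ is supported in a compact subinterval of $[-c/2,c/2]$ and, by Hölder, lies in $L^2$ precisely because $f,h\in L^4$; this is where the exponent $4$ is used. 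Since the series $\sum'|n+u/2|^{-1}$ diverges, Theorem~\ref{thm:zalik} guarantees that the translates $\{e^{-2\pi(\,\cdot\,-\lambda)^2}:\lambda\in\Z+u/2\}$ are complete in $L^2$ of that interval, forcing $\delta_u\equiv0$. Thus $f(t)\overline{f(t-u)}=h(t)\overline{h(t-u)}$ for all $u\in\R$ and a.e.\ $t$; equivalently $f_\tau=h_\tau$ for every $\tau$ in the notation \eqref{shorthand_notation}.

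Finally I would propagate the phase. Setting $u=0$ yields $|f|=|h|$ a.e. On the common support $S=\{|f|>0\}$, writing $f=|f|e^{i\phi}$ and $h=|h|e^{i\theta}$, the identity forces $\phi(t)-\theta(t)=\phi(t-u)-\theta(t-u)$ whenever $t,t-u\in S$; as $u$ ranges over $\R$ this links any two points of $S$, so $\phi-\theta$ is a.e.\ constant on $S$, and since $f=h=0$ off $S$ this produces a single $\tau\in\T$ with $f=\tau h$. I expect the crux to be the middle step: recognizing that, after the Shannon reduction, the dependence on the integer shift $x$ is exactly a sampling of a Gaussian convolution on the lattice $\Z+u/2$, so that Zalik's theorem applies. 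The $L^4$ hypothesis enters here in a sharp way, as it is precisely what places the products $f(\cdot)\overline{f(\cdot-u)}$ in $L^2$, the space in which completeness of the Gaussian translates turns orthogonality into vanishing.
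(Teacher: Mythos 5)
Your proof is correct and follows essentially the same route as the paper's: Shannon sampling of $|\mathcal{G}f(x,\cdot)|^2\in PW_{2c}^2(\R)$ in the frequency variable, followed by Zalik's theorem applied to the Gaussian translates arising in the autocorrelation identity, with the $L^4$ hypothesis invoked exactly where the paper uses it (via H\"older) to place $f(\cdot)\overline{f(\cdot-u)}-h(\cdot)\overline{h(\cdot-u)}$ in $L^2$. The only cosmetic difference is that you absorb the $u$-dependence into a shifted lattice $\Z+u/2$ of translates of a fixed Gaussian, whereas the paper keeps the lattice $\Z$ and lets the Gaussian $\varphi_\omega$ depend on the frequency shift; these are the same computation.
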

\begin{proof}

We denote by $\varphi(t) = e^{-\pi t^2}$ the Gaussian window function. Further, we define $p_x(t) \coloneqq f(t+\frac{x}{2})\overline{\varphi(t-\frac{x}{2})}$. From the definition of the cross-ambiguity function it follows that
$$
A(f,\varphi)(x,\omega)\overline{A(f,\varphi)(x,\omega)} = \ft p_x(\omega) \overline{\ft p_x (\omega)} = \ift(\ft^2 p_x * \overline{p_x})(\omega).
$$
The convolution $\ft^2p_x * \overline{p_x}$ can be written as
$$
(\ft^2p_x * \overline{p_x})(\omega) = \int_\R T_\omega f(k) \, \overline{f(k)} \, T_x[\overline{T_\omega \varphi(k)} \, \varphi(k)] \, dk = \langle f_\omega, T_x \varphi_\omega \rangle,
$$
where we used the change of variables $k \mapsto k - \frac{x}{2}$ in the first equality as well as the notation as defined in equation \eqref{shorthand_notation}. From the fact that $\mathcal{G}f(x,\omega) = e^{-\pi i x \omega} A(f,\varphi)(x,\omega)$ we obtain the identity
\begin{equation}\label{ft_relation}
\ft_2(|\mathcal{G}f|^2)(x,\omega) = \langle f_\omega, T_x \varphi_\omega \rangle,
\end{equation}
where $\ft_2$ denotes the Fourier transform with respect to the second argument. Observe that $f_\omega = 0$ a.e. whenever $|\omega| > c$. Consequently, for every $x \in \R$ we have $|\mathcal{G}f(x,\cdot)|^2 \in PW_{2c}^2(\R)$. Now suppose that a function $h \in L^4[-\tfrac{c}{2},\tfrac{c}{2}]$ satisfies
$$
|\mathcal{G}f(z)| = |\mathcal{G}h(z)|
$$
for every $z \in \frac{1}{b}\Z \times \frac{1}{2c}\Z$. By Shannon's sampling theorem we know that $\frac{1}{2c}\Z$ is a uniqueness set for $PW_{2c}^2(\R)$. Consequently, the functions $|\mathcal{G}f(\lambda,\cdot)|^2$ and $|\mathcal{G}h(\lambda,\cdot)|^2$ agree for ever $\lambda \in \frac{1}{b}\Z$. Taking the Fourier transform with respect to the second argument and consulting equation \eqref{ft_relation} gives the relation
$$
\langle f_\omega - h_\omega, T_\lambda\varphi_\omega \rangle = 0
$$
which holds for every $\omega \in \R$ and every $\lambda \in \frac{1}{b}\Z$. Observe, that for a fixed $\omega$, the function $\varphi_\omega$ is a Gaussian of the form $\varphi_\omega(t) = ae^{-b^2(t-d)^2}$ for some $a,b>0, d \in \R$. Clearly, the series 
$$
\sum_{\lambda \in \frac{1}{b}\Z}^{}{}^{'} \frac{1}{|\lambda|} = b \sum_{n \in \Z \setminus \{ 0 \}} |n|^{-1}
$$
is divergent and it follows from Zalik's theorem that the system of translates $\{ T_\lambda\varphi_\omega : \lambda \in \frac{1}{b}\Z \}$ is complete in $L^2[-\tfrac{c}{2},\tfrac{c}{2}]$. Note that $f_\omega$ and $h_\omega$ have support in $[-\tfrac{c}{2},\tfrac{c}{2}]$ and by Hölder's inequality $f_\omega - h_\omega \in L^2[-\tfrac{c}{2},\tfrac{c}{2}]$. Therefore, $f_\omega = h_\omega$ for every $\omega \in \R$. The choice $\omega = 0$ shows that
\begin{equation}\label{tau}
|f|^2 = |h|^2
\end{equation}
and $h=0$ whenever $f=0$. If $f \neq 0$, select a point $p \in \R$ for which $f(p) \neq 0$. Then $f(t)\overline{f(p)} = h(t)\overline{h(p)}$ for every $t$. Hence, there exists a $\tau \in \C$ such that $f = \tau h$. Equation \eqref{tau} implies $\tau \in \T$.
\end{proof}

In Theorem \ref{main_theorem} the sampling set $X=\frac{1}{b}\Z \times \frac{1}{2c}\Z$ is a lattice, $X = A\Z^2$ with $A = \mathrm{diag}(\frac{1}{b},\frac{1}{2c})$. This lattice was chosen in such a way that both Shannon's sampling theorem and Zalik's theorem are applicable. In view of Theorem \ref{sampling_pw} and the assumptions of Zalik's theorem, the lattice $A$ can be replaced by more general sampling sets as the following result shows. 

\begin{theorem}\label{generalization}
Suppose that $A,B \subseteq \R$ are two sequences of real numbers with the following properties:
\begin{enumerate}
\item the elements of $A$ are distinct and the series of reciprocals $\sum'_{a \in A} |a|^{-1}$ diverges,
\item $B$ is a uniqueness set for $PW_{2c}^2(\R)$ for some $c>0$.
\end{enumerate}
Then every $f \in L^4[-\tfrac{c}{2},\tfrac{c}{2}]$ is determined up to a global phase by the values $|\mathcal{G}f(A \times B)|$. In particular, the pair $(L^4[-\tfrac{c}{2},\tfrac{c}{2}], A \times B)$ satisfies condition \textbf{(U)}.
\end{theorem}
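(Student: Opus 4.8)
The plan is to observe that the proof of Theorem~\ref{main_theorem} never used the specific arithmetic structure of the lattice $\Z \times (2c)^{-1}\Z$, but only two features of it: that the frequency-sampling set $(2c)^{-1}\Z$ is a uniqueness set for $PW_{2c}^2(\R)$ (which there came from Shannon's theorem), and that the set of shifts $\Z$ satisfies the reciprocal-sum hypothesis of Zalik's theorem. Hypotheses (1) and (2) of the statement are precisely these two features for the general sequences $A$ and $B$. I would therefore re-run the argument of Theorem~\ref{main_theorem} essentially verbatim, substituting $B$ for $(2c)^{-1}\Z$ and $A$ for $\Z$, and check at each invocation that the cited result still applies.

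Concretely, suppose $f,h \in L^4[-c/2,c/2]$ satisfy $|\mathcal{G}f(a_n,b_m)| = |\mathcal{G}h(a_n,b_m)|$ for all $n,m$. First I would recall from identity~\eqref{ft_relation} together with the support property $f_\omega = 0$ for $|\omega| > c$ that, for each fixed $x$, the slice $|\mathcal{G}f(x,\cdot)|^2$ lies in $PW_{2c}^2(\R)$, and likewise for $h$. Fixing $a_n$, the difference $|\mathcal{G}f(a_n,\cdot)|^2 - |\mathcal{G}h(a_n,\cdot)|^2$ is then an element of $PW_{2c}^2(\R)$ vanishing on $B$; since $B$ is a uniqueness set for $PW_{2c}^2(\R)$ by hypothesis~(2), this difference vanishes identically. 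Applying $\ft_2$ and consulting \eqref{ft_relation} yields $\langle f_\omega - h_\omega, T_{a_n}\varphi_\omega \rangle = 0$ for every $\omega \in \R$ and every $n$.

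For the second half I would fix $\omega$ and note, as in the original proof, that $\varphi_\omega$ is a Gaussian of the form $a e^{-b^2(t-d)^2}$ with $a,b>0$, $d \in \R$. By hypothesis~(1) the $a_n$ are distinct with $\sum' |a_n|^{-1} = \infty$, so Zalik's theorem (Theorem~\ref{thm:zalik}) guarantees that $\{ T_{a_n}\varphi_\omega \}_n$ is complete in $L^2[-c/2,c/2]$. Since $f,h \in L^4[-c/2,c/2]$, Hölder's inequality places $f_\omega - h_\omega$ in $L^2[-c/2,c/2]$, whence orthogonality to a complete system forces $f_\omega = h_\omega$ for every $\omega$. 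The global-phase conclusion is then extracted exactly as before: $\omega = 0$ gives $|f|^2 = |h|^2$, and evaluating $f_\omega = h_\omega$ at a point $p$ with $f(p)\neq 0$ produces a scalar $\tau$ with $f = \tau h$, which $|f|^2 = |h|^2$ forces into $\T$.

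I do not expect a genuine obstacle, since the theorem is a direct abstraction of Theorem~\ref{main_theorem}. The only points requiring care are the two membership checks that let the cited theorems fire: that the real-valued difference of spectrogram slices genuinely belongs to the (complex) Paley-Wiener space $PW_{2c}^2(\R)$, so that the uniqueness-set hypothesis~(2) applies, and that Hölder's inequality indeed yields $f_\omega - h_\omega \in L^2[-c/2,c/2]$, so that the $L^2$-completeness furnished by Zalik's theorem can be invoked. Once these are in place, the argument is a line-by-line transcription of the previous proof.
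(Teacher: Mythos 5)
Your proposal is correct and coincides with the paper's own proof, which likewise just substitutes $A$ for $\Z$ and $B$ for $(2c)^{-1}\Z$ in the argument of Theorem~\ref{main_theorem}, noting that hypotheses (1) and (2) are exactly what make Zalik's theorem and the Paley--Wiener uniqueness step go through. Your explicit verification of the two membership checks is a careful spelling-out of what the paper leaves implicit.
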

\begin{proof}
Arguing in a similar fashion as in the proof of Theorem \ref{main_theorem}, we observe that $|\mathcal{G}f(x,\cdot)|^2, |\mathcal{G}h(x,\cdot)|^2 \in PW_{2c}^2(\R)$ for every $x \in \R$ provided that $f,h \in L^4[\frac{c}{2},\frac{c}{2}]$. The assumption on $B$ being a uniqueness set for $PW_{2c}^2(\R)$ shows that for every $a \in A$ the implication
\begin{equation*}
    \begin{split}
         & |\mathcal{G}f(a,b)|^2 = |\mathcal{G}h(a,b)|^2  \ \ \forall b \in B \\
         \implies & |\mathcal{G}f(a,\omega)|^2 = |\mathcal{G}h(a,\omega)|^2 \ \ \ \ \forall \omega \in \R
    \end{split}
\end{equation*}
holds true. Using identity \eqref{ft_relation} and applying the Fourier transform results in the equation
$$
\langle f_\omega , T_{a} \varphi_\omega \rangle = \langle h_\omega , T_{a} \varphi_\omega \rangle
$$
which holds for every $a \in A$ and every $\omega \in \R$. Since $\varphi_\omega$ is a Gaussian, $f_\omega - h_\omega \in L^2[-\frac{c}{2},\frac{c}{2}]$ and $\sum'_{a \in A} |a|^{-1} = \infty$, Zalik's theorem implies that $f_\omega = h_\omega$ for every $\omega \in \R$. This gives $f = \tau h$ for some $\tau \in \T$ which concludes the proof of the statement.
\end{proof}

Note that in Theorem \ref{generalization} the assumption on $B$ being a uniqueness set for $PW_{2c}^2(\R)$ is satisfied if one requires that $B$ has effective Beurling-Malliavin density $D^*(B)>2c$. 
For instance, we could choose $B=b+\beta \Z$ where $b\geq 0$ is a non-negative real number and $0<\beta\leq \frac{1}{2c}$. Further, if $A$ takes the form $A=a+\alpha \Z$ with $a \geq 0$ and $\alpha>0$ then $A$ satisfies condition 1 of Theorem \ref{generalization} which implies that the pair $(L^4[-\tfrac{c}{2},\tfrac{c}{2}], (a+\alpha\Z) \times (b+\beta \Z))$ satisfies condition \textbf{(U)}. This sampling set is a shifted lattice and is visualized in Figure \ref{fig:grd}.

\begin{figure}\label{fig:grd}
  \hspace*{0.4cm}  
  \includegraphics[width=11.5cm]{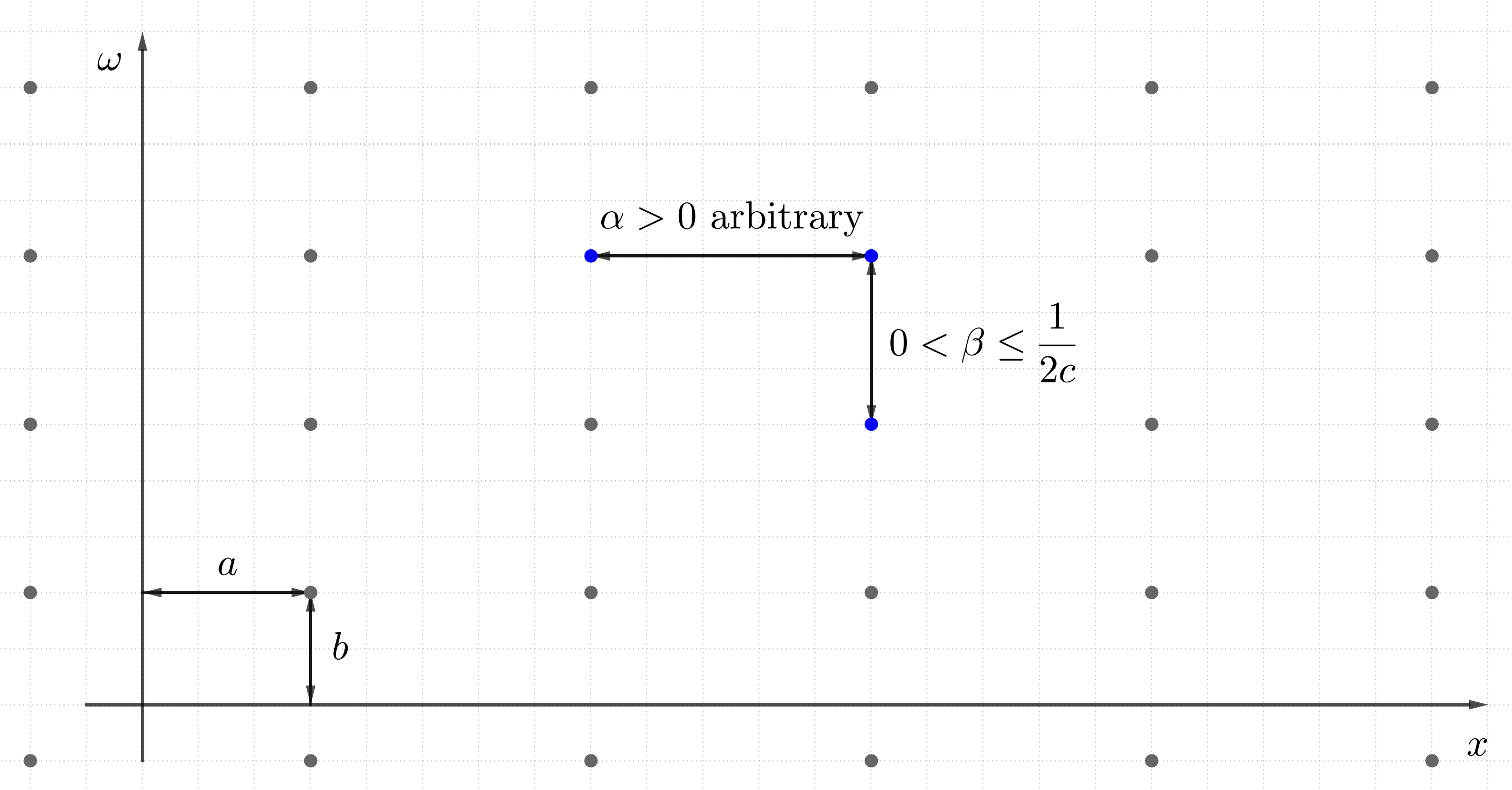}
  \caption{Lattice in the time-frequency plane. Functions in $L^4[-\tfrac{c}{2},\tfrac{c}{2}]$ are determined from measurements given on lattices of the form $(a+\alpha\Z) \times (b+\beta \Z)$ where $a,b \geq 0$ and $\alpha >0$ are arbitrary and $\beta$ satisfies $0<\beta \leq \frac{1}{2c}$.}
\end{figure}
Observe further that in Theorem \ref{generalization} the uniqueness set $B$ does not need to be a priori fixed. In fact, for every single $a \in A$ we can choose a different uniqueness set $S^a \subseteq \R$.

\begin{theorem}
Let $c>0$ and let $A \subseteq \R$ be a sequence of distinct real numbers with the property that the series of reciprocals $\sum'_{a \in A} |a|^{-1}$ diverges. For every $a \in A$ let $S^a$ be a uniqueness set for $PW_{2c}^2(\R)$. Set
$$
X = \bigcup_{a \in A} \{ (a,s) :s \in S^a \}
$$
and $\mathcal{C} = L^4[-\tfrac{c}{2},\tfrac{c}{2}]$. Then $(\mathcal{C},X)$ satisfies condition \textbf{(U)}.
\end{theorem}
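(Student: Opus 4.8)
The plan is to follow the proof of Theorem~\ref{main_theorem} almost verbatim, the only genuinely new ingredient being that the single uniqueness set $(2c)^{-1}\Z$ used there is replaced, column by column, by the $n$-dependent uniqueness sets $(s_k^n)_k$. So suppose $f,h \in L^4[-c/2,c/2]$ satisfy $|\mathcal{G}f(z)| = |\mathcal{G}h(z)|$ for all $z \in X$. First I would recall, exactly as in the proof of Theorem~\ref{main_theorem}, that for every fixed first coordinate $x$ the function $\omega \mapsto |\mathcal{G}f(x,\omega)|^2$ lies in $PW_{2c}^2(\R)$: its Fourier transform with respect to the second argument equals $\langle f_\omega, T_x\varphi_\omega \rangle$ by \eqref{ft_relation}, and the shorthand $f_\omega = (T_\omega f)\overline{f}$ from \eqref{shorthand_notation} vanishes a.e.\ once $|\omega| > c$ owing to the support of $f$. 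The same applies to $h$.

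Next, for each fixed $n \in \N$, the hypothesis yields $|\mathcal{G}f(a_n, s_k^n)|^2 = |\mathcal{G}h(a_n, s_k^n)|^2$ for all $k$. Since $|\mathcal{G}f(a_n,\cdot)|^2$ and $|\mathcal{G}h(a_n,\cdot)|^2$ both belong to $PW_{2c}^2(\R)$, so does their difference, and $(s_k^n)_k$ is by assumption a uniqueness set for this space; hence the two functions agree identically on $\R$. This is the step in which the freedom to pick a different sampling set for each $a_n$ is exploited, and it is precisely where the present statement improves on Theorem~\ref{generalization}, whose set $B$ was fixed across all columns. I would then apply the Fourier transform in the second argument and invoke \eqref{ft_relation} once more to obtain
\[
\langle f_\omega - h_\omega, \, T_{a_n}\varphi_\omega \rangle = 0 \qquad \text{for every } \omega \in \R \text{ and every } n \in \N.
\]

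From here the argument is identical to that of Theorem~\ref{main_theorem}. Fix $\omega$; then $\varphi_\omega$ is a Gaussian of the form $a e^{-b^2(t-d)^2}$ with $a,b>0$ and $d\in\R$, and since the $a_n$ are distinct with $\sum' |a_n|^{-1} = \infty$, Zalik's theorem (Theorem~\ref{thm:zalik}) shows that $\{T_{a_n}\varphi_\omega : n \in \N\}$ is complete in $L^2[-c/2,c/2]$. As $f_\omega - h_\omega$ is supported in $[-c/2,c/2]$ and lies in $L^2[-c/2,c/2]$ by Hölder's inequality, completeness forces $f_\omega = h_\omega$ for every $\omega$. Specialising to $\omega = 0$ gives $|f|^2 = |h|^2$, and the usual global-phase argument (choosing $p$ with $f(p)\neq 0$ and reading off $f(t)\overline{f(p)} = h(t)\overline{h(p)}$ from $f_\omega = h_\omega$) produces a constant $\tau$ with $f = \tau h$, which $|f|^2 = |h|^2$ forces into $\T$.

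I do not anticipate a real obstacle: the proof is a direct adaptation of Theorem~\ref{main_theorem}, and the only point requiring care is the legitimacy of the per-column application of the uniqueness-set property, namely verifying that each difference $|\mathcal{G}f(a_n,\cdot)|^2 - |\mathcal{G}h(a_n,\cdot)|^2$ genuinely lies in $PW_{2c}^2(\R)$ so that its vanishing on $(s_k^n)_k$ implies it vanishes identically. Everything downstream of that is exactly the completeness-plus-global-phase mechanism already established.
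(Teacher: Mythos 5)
Your proposal is correct and is exactly the adaptation the paper intends: the authors dispose of this theorem by stating that the proof is essentially that of Theorem~\ref{main_theorem}, and your write-up carries out that adaptation faithfully, with the per-column application of the uniqueness-set hypothesis and the subsequent appeal to Zalik's theorem being the only (correctly handled) modifications. No gaps.
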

\begin{proof}
Suppose that $f,h \in L^4[-\frac{c}{2},\frac{c}{2}]$ are such that $$
|\mathcal{G}f(z)| = |\mathcal{G}h(z)| \ \ \forall z \in X.
$$
With an analogous argument as in the proof of Theorem \ref{generalization} we infer from the assumption on $S^a$ being a uniqueness set for $PW_{2c}^2(\R)$ that
$$
|\mathcal{G}f(a,\omega)| = |\mathcal{G}h(a,\omega)| \ \ \forall a \in A \ \forall \omega \in \R.
$$
An application of the Fourier transform together with the identity \eqref{ft_relation} shows that 
$$
\langle f_\omega - h_\omega , T_{a} \varphi_\omega \rangle = 0 \ \ \forall a \in A \ \forall \omega \in \R.
$$
By assumption, the series of reciprocals $\sum'_{a \in A} |a|^{-1}$ diverges. Therefore, Zalik's theorem implies that $f_\omega = h_\omega$ for every $\omega \in \R$ which in turn yields $f=\tau h$ for some $\tau \in \T$.
\end{proof}

Consulting Section \ref{sec:frft}, new uniqueness results can be obtained via a phase-space rotation. In particular, all of the previous theorems can be stated in an analogous manner for band-limited functions by means of a rotation of the lattice by 90 degrees.

\begin{proposition}
Let $\theta \in \R$ and let $R_\theta$ be the rotation matrix as defined in equation \ref{eq:rotationmatrix}. Further, let $b>0$ and $\mathcal{C}_\theta \coloneqq \ft_{-\theta}L^4[-\tfrac{c}{2},\tfrac{c}{2}]$. If
$$
X_\theta \coloneqq R_\theta( \tfrac{1}{b} \Z \times \tfrac{1}{2c}\Z) \coloneqq \{ R_\theta z : z \in \tfrac{1}{b} \Z \times \tfrac{1}{2c}\Z \}
$$
then $(\mathcal{C}_\theta , X_\theta)$ satisfies condition $\textbf{(U)}$. In particular, all band-limited functions in the space
$$
\mathcal{C}_{\frac{\pi}{2}} = \ift L^4[-\tfrac{c}{2},\tfrac{c}{2}] =  \{ f \in PW_c^2(\R) : \hat f \in L^4[-\tfrac{c}{2},\tfrac{c}{2}] \}
$$
are determined up to a global phase by the measurements
$$
\{ |\mathcal{G}f(z)| : z \in \tfrac{1}{2c}\Z \times \tfrac{1}{b}\Z \}.
$$
\end{proposition}
\begin{proof}
Suppose that $f,h \in \mathcal{C}_\theta$ satisfy
$$
|\mathcal{G}f(R_\theta z)| = |\mathcal{G}h(R_\theta z)|
$$
for every $z \in \frac{1}{2c}\Z \times \frac{1}{b}\Z$. From the properties of the fractional Fourier transform (see Lemma \ref{lma:frac_properties}) it follows that
$
|\mathcal{G}(\ft_\theta f)(z)| = |\mathcal{G}(\ft_\theta h)(z)|
$
for every $z \in \frac{1}{b}\Z \times \frac{1}{2c}\Z$. By assumption, $\ft_\theta f$ and $\ft_\theta h$ are elements of $L^4[-\tfrac{c}{2},\tfrac{c}{2}]$. Hence, the first part of the statement follows from Theorem \ref{main_theorem}. Since $\ft_{-\frac{\pi}{2}} = \ift$ and
$$
R_{\frac{\pi}{2}} = \left( {\begin{array}{cc}
   0 & -1 \\
   1 & 0 \\
  \end{array} } \right)
$$
the second part of the statement is a special case of the first.
\end{proof}

\subsection{Real-valued maps with compact support}\label{subse:rv}

In the previous section, we derived uniqueness theorems for complex-valued functions under a support condition. We wish to elaborate on these results by providing a comparison to the setting where besides a support condition an additional real-valuedness property holds. To that end, suppose that $f \in L^2[-\frac{c}{2},\frac{c}{2}]$ has a real-valued Fourier transform.
Since the Gabor transform satisfies the relation $|\mathcal{G}f(x,\omega)| = |\mathcal{G}\hat f(-\omega,x)|$ whenever $f \in \lt$ and $(x,\omega) \in \R^2$, the uniqueness problem for compactly-supported functions with a real-valued Fourier transform is equivalent to the uniqueness problem in $PW_c^2(\R,\R)$, the class of real-valued Paley-Wiener functions. This is precisely the setting mentioned in Section \ref{sec:previous_work} and considered in \cite{AlaifariWellershoff} where it was shown that if $\mathcal{C} = PW_c^2(\R,\R)$ and $X = \frac{1}{2c}\Z \times \{ 0 \}$ then $(\mathcal{C},X)$ satisfies condition \textbf{(U)}. Note that this result is an application of the fact that a function in $PW_c^2(\R,\R)$ is determined up to a global phase by samples of its modulus \cite{Thakur2011}.

\begin{theorem}[Thakur]\label{thm:thakur}
Let $p,q \in PW_c^2(\R,\R)$ such that $|p(x)|=|q(x)|$ for every $x \in \frac{1}{2c}\Z$. Then there exists a constant $\tau \in \{ 1,-1 \}$ such that $p=\tau q$.
\end{theorem}

The corresponding uniqueness result in $PW_c^2(\R,\R)$ from spectrogram samples follows readily: since $|\mathcal{G}f(x,0)| = |f * \varphi(x)|$, the assumption that
$$
|\mathcal{G}f(z)| = |\mathcal{G}h(z)| \ \forall z \in \tfrac{1}{2c}\Z \times \{ 0 \}
$$
for some $f,h \in PW_c^2(\R,\R)$ is equivalent to the relation
$$
|f * \varphi(x)| = |h * \varphi(x)| \ \forall x \in \tfrac{1}{2c}\Z.
$$
Since $f * \varphi, h * \varphi \in PW_c^2(\R,\R)$, an application of Thakur's theorem and the fact that $\hat \varphi$ does not vanish yields the existence of a $\tau \in \{ 1,-1 \} \subseteq \T$ so that $f=\tau h$. In contrast, if $f$ is complex-valued then $f * \varphi$ is a complex-valued band-limited function in $PW_c^2(\R)$. But functions in $PW_c^2(\R)$ are \emph{not} determined up to a global phase by samples of their moduli, even if the samples are given on the entire real line.

\begin{example}
Suppose that the Fourier transform of $f \in PW_c^2(\R)$ has support in the interval $[-\frac{c}{4},\frac{c}{4}]$. For $\varepsilon \in [-\frac{c}{4},\frac{c}{4}] \setminus \{ 0 \}$ define
$$
f_\varepsilon(t) = e^{2 \pi i \varepsilon t}f(t).
$$
Then the Fourier transform of $f_\varepsilon$ has support in $[-\frac{c}{2},\frac{c}{2}]$ which gives $f_\varepsilon \in PW_c^2(\R)$. Since $\varepsilon \neq 0$ the support of $\ft f$ and $\ft(f_\varepsilon)$ do not coincide. In particular, there exists no $\tau \in \T$ so that $f = \tau f_\varepsilon$. On the other hand, $|f(t)|=|f_\varepsilon(t)|$ for every $t \in \R$.
\end{example}

The fact that complex-valued band-limited signals are not determined by their moduli produces a natural obstacle in generalizing the proof above which is based on Thakur's theorem from the real-valued setting to the complex-valued setting. However, it yields the new insight that Gabor phase retrieval under a real-valuedness assumption is possible via sampling on a 1-dimensional lattice whereas uniqueness for complex-valued maps is achieved by an extension to a 2-dimensional lattice in the time-frequency plane. This is summarized in

\begin{theorem}\label{thm:rc_c_1}
Let $b,c>0$ and consider the class of functions in $L^4[-\tfrac{c}{2},\tfrac{c}{2}]$ which have a real-valued Fourier transform,
$$
\mathcal{C}_\R = \{ f \in  L^4[-\tfrac{c}{2},\tfrac{c}{2}] : \ft f \ \text{is real-valued} \, \}.
$$
If $X_\R = \{ 0 \} \times \frac{1}{2c}\Z$ then $(\mathcal{C}_\R, X_\R)$ satisfies condition \textbf{(U)}. On the other hand, if
$$
\mathcal{C}_\C = L^4[-\tfrac{c}{2},\tfrac{c}{2}]
$$
and $X_\C$ is the extended lattice $X_\C = \frac{1}{b}\Z \times \frac{1}{2c}\Z$ then $(\mathcal{C}_\C,X_\C)$ satisfies condition \textbf{(U)}.
\end{theorem}
\begin{proof}
Suppose that $f,h \in \mathcal{C}_\R$ are such that
\begin{equation}\label{33}
    |\mathcal{G}f(z)| = |\mathcal{G}h(z)|
\end{equation}
for every $z \in \{ 0 \} \times \frac{1}{2c}\Z$. Since $|\mathcal{G}u(x,\omega)| = |\mathcal{G}\hat u (-\omega,x)|$ for every $(x,\omega) \in \R^2$ and every $u \in \lt$ it follows that equation \eqref{33} is equivalent to the validity of the identity
$$
|\mathcal{G}\hat f(x,0)| = |\mathcal{G}\hat h (x,0)|
$$
for every $x \in \frac{1}{2c}\Z$. The inclusion $L^4[-\frac{c}{2},\frac{c}{2}] \subseteq L^2[-\frac{c}{2},\frac{c}{2}]$ implies that $\hat f, \hat h \in PW_c^2(\R ,\R)$. Further, we have
$$
|\mathcal{G}\hat f (x,0)| = |\hat f * \varphi(x)|
$$
with $\hat f * \varphi \in PW_c^2(\R,\R)$ and the same holds true when $\hat f$ gets replaced by $\hat h$. Hence, the modulus of the two functions $\hat f * \varphi, \hat h * \varphi \in PW_c^2(\R,\R)$ agrees on $\frac{1}{2c}\Z$. Thakur's theorem (Theorem \ref{thm:thakur}) yields the existence of a constant $\tau \in \{ 1,-1\} \subseteq \T$ so that $\hat f * \varphi = \tau (\hat h * \varphi)$. By Fourier uniqueness and the fact that $\varphi$ does not vanish, the previous identity implies that $f$ and $h$ agree up to a global phase. The second statement of the theorem is the content of Theorem \ref{main_theorem}.
\end{proof}

\subsection{Shift-invariant spaces with Gaussian generator}\label{sec:SI}

All of the theorems in the previous chapter used the fact that our signal space $\mathcal{C}$ consists of functions which are compactly-supported in the time domain or the frequency domain (or more general: their fractional Fourier transform has compact support). We wish to make use of the uniqueness Theorem \ref{sampling_si} which holds for shift-invariant spaces with Gaussian generator to determine sets $X \subseteq \R^2$ consisting of separated points such that $(V_\beta^1(\varphi), X)$ satisfies condition \textbf{(U)}. Recall that a set $X \subseteq \R^n$ is called separated if
$$
\inf_{\substack{x,x' \in X \\ x \neq x'}} |x-x'| >0
$$
where $|\cdot|$ denotes the Euclidean distance. We first discuss the general case where the considered signals are complex-valued. In analogy to Theorem \ref{thm:rc_c_1} we then proceed by discussing the real-valued case and show that as in the case of compactly-supported functions, real-valued maps in Gaussian shift-invariant spaces are determined by samples on a 1-dimensional lattice whereas uniqueness in the complex case is achieved by an extension to a 2-dimensional lattice. To tackle the complex case we start with an algebraic lemma.

\begin{lemma}\label{lma}
Let $\beta>0$ and $f = \sum_{k \in \Z} c_k \varphi(\cdot - \beta k) \in V_\beta^1(\varphi)$. Then the modulus of the Gabor transform of $f$ can be written as
$$
|\mathcal{G}f(x,\omega)|^2 = \sum_{k \in \Z} \sum_{j \in \Z} c_k \overline{c_j} a(j,k,\beta) M_{\frac{\beta}{2}(j-k)}\varphi(\omega)T_{\frac{\beta}{2}(j+k)}\varphi(x),
$$
where $a(j,k,\beta) = \frac{1}{2}e^{-\frac{\pi\beta^2}{4}(k-j)^2}$.
\end{lemma}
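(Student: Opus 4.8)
The plan is to expand $|\mathcal{G}f(x,\omega)|^2 = \mathcal{G}f(x,\omega)\overline{\mathcal{G}f(x,\omega)}$ into a double series indexed by the coefficients $c_k$ and to reduce everything to the Gabor transform of a single, translated Gaussian.

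First I would exploit linearity of $\mathcal{G}$ together with the covariance of the Gabor transform under translations. A direct change of variables in the defining integral gives $\mathcal{G}(T_u\varphi)(x,\omega) = e^{-2\pi i u\omega}\mathcal{G}\varphi(x-u,\omega)$, so that $\mathcal{G}f(x,\omega) = \sum_k c_k e^{-2\pi i\beta k\omega}\mathcal{G}\varphi(x-\beta k,\omega)$. Since $c\in\ell^1(\Z)$ and $|\mathcal{G}\varphi|\le 2^{-1/2}$ on all of $\R^2$, this series converges absolutely and uniformly in $(x,\omega)$; the same bound makes the resulting double series for the product $\mathcal{G}f\,\overline{\mathcal{G}f}$ absolutely convergent, which legitimises the interchange of summation and conjugation and lets me write
$$|\mathcal{G}f(x,\omega)|^2 = \sum_k\sum_j c_k\overline{c_j}\,e^{-2\pi i\beta(k-j)\omega}\,\mathcal{G}\varphi(x-\beta k,\omega)\overline{\mathcal{G}\varphi(x-\beta j,\omega)}.$$

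Next I would compute $\mathcal{G}\varphi$ explicitly. Completing the square in the Gaussian integral yields $\mathcal{G}\varphi(x,\omega) = 2^{-1/2}e^{-\frac{\pi}{2}(x^2+\omega^2)}e^{-\pi i x\omega}$. Substituting this into the product $\mathcal{G}\varphi(x-\beta k,\omega)\overline{\mathcal{G}\varphi(x-\beta j,\omega)}$ produces a factor $\tfrac12 e^{-\frac{\pi}{2}((x-\beta k)^2+(x-\beta j)^2)}e^{-\pi\omega^2}$ times the chirp $e^{i\pi\beta(k-j)\omega}$. The decisive bookkeeping step is then to complete the square in $x$: using the identity $(x-\beta k)^2+(x-\beta j)^2 = 2(x-\tfrac{\beta}{2}(j+k))^2 + \tfrac{\beta^2}{2}(k-j)^2$, the $x$-dependent Gaussian collapses to $\varphi(x-\tfrac{\beta}{2}(j+k)) = T_{\frac{\beta}{2}(j+k)}\varphi(x)$ and simultaneously separates off the constant $e^{-\frac{\pi\beta^2}{4}(k-j)^2}$, while the $\omega$-Gaussian is simply $\varphi(\omega)$.

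Finally I would absorb the two surviving phase factors. Multiplying the outer exponential $e^{-2\pi i\beta(k-j)\omega}$ by the chirp $e^{i\pi\beta(k-j)\omega}$ leaves $e^{i\pi\beta(j-k)\omega} = e^{2\pi i\frac{\beta}{2}(j-k)\omega}$, so that $e^{2\pi i\frac{\beta}{2}(j-k)\omega}\varphi(\omega) = M_{\frac{\beta}{2}(j-k)}\varphi(\omega)$. Collecting the pieces identifies the coefficient $a(j,k,\beta) = \tfrac12 e^{-\frac{\pi\beta^2}{4}(k-j)^2}$ and yields exactly the claimed expression. I expect the only genuine subtlety to be the convergence and interchange justification for the double sum, which is handled by the uniform bound on $\mathcal{G}\varphi$ together with $c\in\ell^1$; the remainder is completing-the-square algebra, routine in itself but requiring careful tracking so that the splitting into the operators $T$, $M$ and the scalar $a(j,k,\beta)$ comes out correctly.
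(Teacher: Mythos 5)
Your proposal is correct and follows essentially the same route as the paper: expand $\mathcal{G}f$ by linearity and translation covariance, insert the explicit formula $\mathcal{G}\varphi(x,\omega)=2^{-1/2}e^{-\pi i x\omega}e^{-\frac{\pi}{2}(x^2+\omega^2)}$, and complete the square in $x$ to split off $T_{\frac{\beta}{2}(j+k)}\varphi$, $M_{\frac{\beta}{2}(j-k)}\varphi$ and the constant $a(j,k,\beta)$. The only difference is that you explicitly justify the interchange of summation via the uniform bound on $\mathcal{G}\varphi$ and $c\in\ell^1$, which the paper leaves implicit.
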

\begin{proof}
If $f = \sum_{k \in \Z} c_k \varphi(\cdot - \beta k)$ then it follows from the properties of the Gabor transform that
$$
\mathcal{G}f(x,\omega) = \sum_{k \in \Z} c_k e^{-2\pi i\beta k\omega} \mathcal{G}\varphi(x-\beta k,\omega).
$$
Note that the Gabor transform of the Gaussian $\varphi$ is given by $\mathcal{G}\varphi(x,\omega) = \frac{1}{\sqrt 2} e^{-\pi ix\omega}e^{-\frac{\pi}{2}(x^2+\omega^2)}$ and therefore
$$
\mathcal{G}f(x,\omega) = \sum_{k \in \Z} \frac{c_k}{\sqrt 2} e^{-2\pi i \beta k\omega} e^{-\pi i(x-\beta k)\omega}e^{-\frac{\pi}{2}((x-\beta k)^2+\omega^2)}.
$$
Consequently, we obtain the identity
$$
|\mathcal{G}f(x,\omega)|^2 = \sum_{k \in \Z} \sum_{j \in \Z} \frac{c_k\overline{c_j}}{2} e^{\pi i \beta (j-k)\omega} e^{-\frac{\pi}{2}((x-\beta k)^2 + (x-\beta j)^2 + 2 \omega^2)}.
$$
But $$e^{-\frac{\pi}{2}((x-\beta k)^2 + (x-\beta j)^2 + 2 \omega^2)} =  e^{-\frac{\pi\beta^2}{4}(k-j)^2} \varphi(\omega) \varphi( x-\tfrac{\beta}{2}(j+k))$$ which yields the statement.
\end{proof}

The previous representation of $|\mathcal{G}f(x,\omega)|^2$ will be used to provide a condition on the step size $\beta$ such that every function in $V_\beta^1(\varphi)$ is determined up to a global phase from lattice measurements. This is the content of

\begin{theorem}\label{main_theorem2}
Let $\beta \in \R_{>0} \setminus \Q_{>0}$ and let $\Lambda \subseteq \R$ be a sequence with lower Beurling density $D^-(\Lambda)>\frac{2}{\beta}$. Then every $f \in  V_\beta^1(\varphi)$ is determined up to a global phase by the measurements
$$
\{ |\mathcal{G}f(x, \omega)| : (x,\omega) \in \Lambda \times \Z \}.
$$
In particular, for every $\varepsilon>0$ and every $\beta \in \R_{>0} \setminus \Q_{>0}$ the pair $(V_\beta^1(\varphi), \frac{\beta}{2+\varepsilon}\Z \times \Z)$ satisfies condition \textbf{(U)}.
\end{theorem}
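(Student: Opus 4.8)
The plan is to reduce the discrete problem to the known continuous-case uniqueness $(\lt,\R^2)$ from item \ref{itm:pair1} of the introduction, by showing that the samples on $\Lambda\times\Z$ already determine the full spectrogram on all of $\R^2$. Suppose $f=\sum_k c_k\varphi(\cdot-\beta k)$ and $h=\sum_k d_k\varphi(\cdot-\beta k)$ lie in $V_\beta^1(\varphi)$ and satisfy $|\mathcal{G}f(x,\omega)|=|\mathcal{G}h(x,\omega)|$ for all $(x,\omega)\in\Lambda\times\Z$. The starting point is the expansion of Lemma \ref{lma}, which I would regroup in two different ways: once according to the translation index $j+k$ (to extend in $x$) and once according to the modulation index $j-k$ (to extend in $\omega$).

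First I would extend in $x$. Fixing $\omega$ and collecting the terms of Lemma \ref{lma} with $j+k=\ell$ gives $|\mathcal{G}f(x,\omega)|^2=\sum_{\ell\in\Z} e_\ell(\omega)\,\varphi(x-\tfrac{\beta}{2}\ell)$, where the shifts run through $\tfrac{\beta}{2}\Z$ and the coefficient sequence $(e_\ell(\omega))_\ell$ is absolutely summable, uniformly in $\omega$, because $c\in\ell^1$ and $a(j,k,\beta)=\tfrac12 e^{-\pi\beta^2(k-j)^2/4}\le\tfrac12$. Hence for each fixed $\omega$ the map $x\mapsto|\mathcal{G}f(x,\omega)|^2$ belongs to the shift-invariant space $V_{\beta/2}^\infty(\varphi)$ --- crucially with step size $\beta/2$ rather than $\beta$, since squaring the modulus places Gaussians at the midpoints $\tfrac{\beta}{2}(j+k)$. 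Because $D^-(\Lambda)>2/\beta=(\beta/2)^{-1}$, Theorem \ref{sampling_si} shows $\Lambda$ is a uniqueness set for $V_{\beta/2}^\infty(\varphi)$, so the difference $|\mathcal{G}f(\cdot,\omega)|^2-|\mathcal{G}h(\cdot,\omega)|^2$, which lies in this space and vanishes on $\Lambda$, is identically zero. This upgrades the hypothesis to $|\mathcal{G}f(x,\omega)|^2=|\mathcal{G}h(x,\omega)|^2$ for \emph{all} $x\in\R$ and every $\omega\in\Z$.

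Next I would extend in $\omega$. Fixing $x$ and regrouping Lemma \ref{lma} according to $m=j-k$, one writes $|\mathcal{G}f(x,\omega)|^2/\varphi(\omega)=\sum_{m\in\Z}B_m(x)\,e^{\pi i\beta m\omega}$, an absolutely and uniformly convergent Fourier--Bohr series with frequency set $\tfrac{\beta}{2}\Z$. Evaluating at $\omega=n\in\Z$ and forming the Cesàro averages $\tfrac{1}{2N+1}\sum_{|n|\le N}(\cdot)\,e^{-\pi i\beta m_0 n}$, I would invoke Weyl's equidistribution theorem: since $\beta$ is \emph{irrational}, the quantity $\tfrac{\beta}{2}(m-m_0)$ is a non-integer for every $m\neq m_0$, so each such average tends to $\delta_{m,m_0}$ and isolates the coefficient $B_{m_0}(x)$. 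As the integer samples of $|\mathcal{G}f(x,\cdot)|^2$ and $|\mathcal{G}h(x,\cdot)|^2$ now agree for every $x\in\R$, this recovers $B_m^f(x)=B_m^h(x)$ for all $m$ and $x$, whence $|\mathcal{G}f|^2=|\mathcal{G}h|^2$ on all of $\R^2$; the continuous-case result then gives $f=\tau h$ with $\tau\in\T$.

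I expect the Weyl step to be the main obstacle: one must justify interchanging the Cesàro limit with the summation over $m$ (here legitimized by the Gaussian factor $e^{-\pi\beta^2 m^2/4}$ together with $c\in\ell^1$), and it is precisely the irrationality of $\beta$ that renders the frequencies $\tfrac{\beta}{2}m$ pairwise separable by integer sampling, whereas for rational $\beta$ distinct indices $m$ collide modulo the sampling and a separate argument is needed. Finally, the \emph{in particular} claim follows by taking $\Lambda=\tfrac{\beta}{2+\varepsilon}\Z$, whose lower Beurling density is $\tfrac{2+\varepsilon}{\beta}>\tfrac{2}{\beta}$, so that the hypothesis $D^-(\Lambda)>2/\beta$ is met.
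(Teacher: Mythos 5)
Your argument is correct and follows the paper's architecture almost exactly: the same two regroupings of the expansion from Lemma \ref{lma} (by $j+k$ to place $x\mapsto|\mathcal{G}f(x,\omega)|^2$ in a shift-invariant space with the halved step $\beta/2$ and extend in $x$ via Theorem \ref{sampling_si}, then by $j-k$ to extend in $\omega$), followed by the same reduction to the continuous-case uniqueness $(\lt,\R^2)$. The one genuine divergence is the $\omega$-step. The paper observes that $\omega\mapsto|\mathcal{G}f(x,\omega)|^2/\varphi(\omega)$ is continuous and $\tfrac{2}{\beta}$-periodic, invokes Weyl's criterion to see that the integers reduced modulo $\tfrac{2}{\beta}$ are dense in $[0,\tfrac{2}{\beta}]$ when $\beta$ is irrational, and concludes by continuity and periodicity. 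You instead recover each Fourier--Bohr coefficient $B_m(x)$ directly from the integer samples by Cesàro averaging, using that $e^{\pi i\beta(m-m_0)}\neq 1$ for $m\neq m_0$ (irrationality entering at the same point); the interchange of the average with the sum over $m$ is legitimate thanks to the uniform bound $\sum_m|B_m(x)|\le\|c\|_{\ell^1(\Z)}^2$, as you note. Your variant is slightly more quantitative --- it reconstructs the coefficients rather than merely identifying a continuous periodic function on a dense set --- and needs only the elementary geometric-sum estimate rather than equidistribution, at the cost of one limit interchange; the paper's variant is softer and avoids that interchange. Both arguments fail for rational $\beta$ for the same reason: distinct frequencies $\tfrac{\beta}{2}m$ then collide under integer sampling. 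The verification of the \emph{in particular} clause via $D^-(\tfrac{\beta}{2+\varepsilon}\Z)=\tfrac{2+\varepsilon}{\beta}>\tfrac{2}{\beta}$ matches the paper.
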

\begin{proof}
Suppose that $f = \sum_{k \in \Z} c_k \varphi(\cdot - \beta k) \in V_\beta^1(\varphi)$ with $(c_k)_k \in \ell^1(\Z).$ Let $a(j,k,\beta)$ be defined as in Lemma \ref{lma}. Define a sequence $(A_n)_n \subseteq \R$ via
$$
A_n = \sum_{j,k \in \Z, \, j+k =n} c_k\overline{c_j}a(j,k,\beta)M_{\frac{\beta}{2}(j-k)}\varphi(\omega)
$$
where $\omega \in \R$ is fixed. Moreover, for any fixed $x \in \R$ define the sequence $(B_n)_n \subseteq \R$ via
$$
B_n = \sum_{j,k \in \Z, \, j-k =n} c_k\overline{c_j}a(j,k,\beta)T_{\frac{\beta}{2}(j+k)}\varphi(x).
$$
It follows from Lemma \ref{lma} that
$$
|\mathcal{G}f(x,\omega)|^2 = \sum_{n \in \Z} A_n T_{\frac{\beta}{2}n}\varphi(x) = \sum_{n \in \Z} B_n M_{\frac{\beta}{2}n}\varphi(\omega).
$$
Since $|a(j,k,\beta)| \leq 1$ for every $j,k \in \Z$ and every $\beta > 0$ we have $\norm {A} {\ell^1(\Z)} \leq \varphi(\omega) \norm {c} {\ell^1(\Z)}^2$ and $\norm {B} {\ell^1(\Z)} \leq \norm {c} {\ell^1(\Z)}^2$. We conclude that for every $\omega \in \R$, the map $x \mapsto |\mathcal{G}f(x,\omega)|^2$ is an element of the shift-invariant space $V_{\frac{\beta}{2}}^1(\varphi)$. In addition, for every $x \in \R$ the map
$$
\omega \mapsto \frac{|\mathcal{G}f(x,\omega)|^2}{\varphi(\omega)}
$$
is continuous and $\frac{2}{\beta}$-periodic. Now suppose that $h \in V_\beta^1(\varphi)$ satisfies
$$
|\mathcal{G}f(\lambda,n)| = |\mathcal{G}h(\lambda,n)|
$$
for every $(\lambda,n) \in \Lambda \times \Z$. According to the previous arguments, we have $|\mathcal{G}f(\cdot,n)|^2, |\mathcal{G}h(\cdot,n)|^2 \in V_{\frac{\beta}{2}}^1(\varphi)$. By Theorem \ref{sampling_si}, $\Lambda$ is a uniqueness set for $V_{\frac{\beta}{2}}^1(\varphi)$ and therefore
$
|\mathcal{G}f(x,n)| = |\mathcal{G}h(x,n)|
$
for every $x \in \R$ and every $n \in \Z$. Fix $x \in \R$. It follows that
\begin{equation}\label{fractions}
\frac{|\mathcal{G}f(x,\omega)|^2}{\varphi(\omega)} = \frac{|\mathcal{G}h(x,\omega)|^2}{\varphi(\omega)}
\end{equation}
whenever $\omega \in \Z$. By periodicity, the left- and right-hand side of equation \eqref{fractions} agree for every $\omega \in D$ where
$$
D \coloneqq \left \{ n \ \mathrm{mod} \ \frac{2}{\beta} : n \in \Z \right\} = \frac{2}{\beta} \left\{ \frac{\beta}{2}n \ \mathrm{mod} \ 1 : n \in \Z \right\}.
$$
Since $\beta$ is irrational, it follows from Weyl's equidistribution theorem that the sequence $\{ \frac{\beta}{2}n \ \mathrm{mod} \ 1 : n \in \Z \}$ is uniformly distributed in $[0,1]$ (see \cite[Satz 2]{Weyl}). Hence, both sides of equation \eqref{fractions} agree on a dense subset of $[0,\frac{2}{\beta}]$. By periodicity and continuity, we conclude that equation \eqref{fractions} holds for every $\omega \in \R$. Since $x$ was arbitrary it follows that the modulus of the Gabor transform of $f$ agrees with the modulus of the Gabor transform of $h$ on whole $\R^2$. Consequently, there exists a $\tau \in \T$ such that $f = \tau h$ (see Section \ref{sec:previous_work}). The second statement follows from the fact that the lower Beurling density of $\tfrac{\beta}{2+\varepsilon}\Z$ satisfies
$
D^{-}(\tfrac{\beta}{2+\varepsilon}\Z) = \tfrac{2+\varepsilon}{\beta} > \tfrac{2}{\beta}.
$
\end{proof}

In the previous theorem, the step size $\beta$ was chosen in such a way that the sequence $\{ n \ \mathrm{mod} \ \frac{2}{\beta} : n \in \Z \}$ is dense in $[0, \frac{2}{\beta}]$. Clearly, we can drop the assumption on $\beta$ to be irrational and instead work with a sampling set $\Lambda \times B$ where $B=\{ b_n : n \in \Z \} \subseteq \R$ is chosen in such a way that $\{ b_n \ \mathrm{mod} \ \frac{2}{\beta} : n \in \Z \}$ is dense in $[0, \frac{2}{\beta}]$.

\begin{theorem}\label{thm:generalization}
Let $\beta > 0$ and suppose that $\Lambda \subseteq \R$ and $B= \{ b_n : n \in \Z \} \subseteq \R$ are real sequences with the following properties:
\begin{enumerate}
\item the sequence $\Lambda$ has lower Beurling density $D^-(\Lambda)>\frac{2}{\beta}$,
\item the sequence $\{ b_n \ \mathrm{mod} \ \frac{2}{\beta} : n \in \Z \}$ is dense in the interval $[0,\frac{2}{\beta}]$.
\end{enumerate}
Then $(V_\beta^1(\varphi),\Lambda \times B)$ satisfies condition \textbf{(U)}. In particular, both $\Lambda$ and $B$ can be chosen to be separated. In this case, $\Lambda \times B$ constitutes a separated subset of $\R^2$.
\end{theorem}
\begin{proof}
Arguing in a similar fashion as in the proof of Theorem \ref{main_theorem2}, we have $|\mathcal{G}f(\cdot,b_n)|^2, |\mathcal{G}h(\cdot,b_n)|^2 \in V_{\frac{\beta}{2}}^1(\varphi)$ whenever $n \in \Z$ and $f,h \in V_\beta^1(\varphi)$. Since $\Lambda$ has lower Beurling density $D^-(\Lambda)>\frac{2}{\beta}$, it follows from Theorem \ref{sampling_si} that $|\mathcal{G}f(x,b_n)|^2 = |\mathcal{G}h(x,b_n)|^2$ for every $x \in \R$ and every $n \in \Z$. In particular,
\begin{equation}\label{eee}
    \frac{|\mathcal{G}f(x,b_n)|^2}{\varphi(b_n)} = \frac{|\mathcal{G}h(x,b_n)|^2}{\varphi(b_n)}
\end{equation}
for every $x \in \R$ and every $n \in \Z$. Since both the left-hand side and the right-hand side of equation \eqref{eee} are continuous and $\frac{2}{\beta}$-periodic, the density of the set $B$ in the interval $[0,\frac{2}{\beta}]$ implies that $|\mathcal{G}f(x,\omega)|^2 = |\mathcal{G}h(x,\omega)|^2$ for every $(x,\omega) \in \R^2$. Hence, $f$ and $h$ agree up to a global phase (see Section \ref{sec:previous_work}).
\end{proof}

Comparing the previous generalization of Theorem \ref{main_theorem2} with the results in \cite{alaifari2020phase,grohsLiehr3} implies that Theorem \ref{thm:generalization} is sharp with respect to the density condition on the sequence $\{ b_n \ \mathrm{mod} \ \frac{2}{\beta} : n \in \Z \}$.

\begin{remark}[On the sharpness of Theorem \ref{thm:generalization}]\label{remark:sharpness}

Let $\beta > 0$ and $\{ c_k \} \in c_{00}(\Z)$ where $c_{00}(\Z)$ denotes the space of all complex sequences with finitely many non-zero components. Further, let
$$
f = \sum_{k \in \Z} c_k T_{\beta k} \varphi
$$
be a linear combination of $\beta \Z$-shifts of $\varphi$. Clearly, we have $f \in V_\beta^1(\varphi)$. Define a second function $\tilde f \in V_\beta^1(\varphi)$ by
$$
\tilde f = \sum_{k \in \Z} \overline{c_k} T_{\beta k} \varphi,
$$
i.e. $\tilde f$ arises from $f$ via complex conjugation of the sequence $\{ c_k \}$. It was shown in \cite[Theorem 3.1]{grohsLiehr3} that the above choice of $f$ and $\tilde f$ implies that
$$
|\mathcal{G}f(z)| = |\mathcal{G}\tilde f (z)| \ \ \forall z \in \R \times \tfrac{1}{\beta} \Z.
$$
Moreover, it was shown that if the sequence $\{ c_k \}$ is not contained in a line in the complex plane passing through the origin, i.e.
$$
\nexists \alpha \in \R :  \{ c_k \} \subseteq e^{i\alpha} \R,
$$
then $f$ and $\tilde f$ do not agree up to a global phase. Hence, uniqueness in $V_\beta^1(\varphi)$ is not achieved from the sampling set $\Lambda \times B \coloneqq \R \times \tfrac{1}{\beta}\Z$. Note that the set $B = \{ b_n : n \in \Z \} = \frac{1}{\beta}\Z$ with $b_n = \frac{1}{\beta} n$, has the property that
$$
\{ b_n \ \mathrm{mod} \ \tfrac{2}{\beta} : n \in \Z \} = \{ \tfrac{1}{\beta} n \ \mathrm{mod} \ \tfrac{2}{\beta} : n \in \Z \}
$$
is not dense in the interval $[0,\frac{2}{\beta}]$ since it is a set of isolated points. Therefore, condition (2) of Theorem \ref{thm:generalization} is violated. In particular, the previous construction shows that, in general, the uniqueness property does not hold anymore if one drops condition (2) of Theorem \ref{thm:generalization}. Observe further, that every rectangular lattice $a\Z \times \frac{1}{\beta}\Z, a,\beta>0,$ is contained in the set $\R \times \frac{1}{\beta}\Z$. This shows additionally that the pair
$$
(V_\beta^1(\varphi), \tfrac{\beta}{2+\varepsilon} \Z \times \tfrac{1}{\beta} \Z)
$$
does not satisfy condition \textbf{(U)} for every $\varepsilon>0$ and every $\beta>0$. In contrast, Theorem \ref{main_theorem2} states that the pair
$$
(V_\beta^1(\varphi), \tfrac{\beta}{2+\varepsilon} \Z \times \Z)
$$
satisfies condition \textbf{(U)} for every $\varepsilon>0$ and every $\beta \in \R_{>0} \setminus \Q_{>0}$. Hence, uniqueness is achieved by making the sampling rate in frequency direction independent of the step-size $\beta$ of the shift-invariant space $V_\beta^1(\varphi)$.
\end{remark}

Similar to the case of compactly-supported functions, a rotation of the time-frequency plane yields uniqueness results for functions which have the property of being the fractional Fourier transform of a function in $V_\beta^1(\varphi)$. More precisely we have

\begin{proposition}\label{prop:si_frac}
Let $\theta \in \R$, $\beta \in \R_{>0} \setminus \Q_{>0}$ and suppose that $\Lambda \subseteq \R$ has lower Beurling density $D^{-}(\Lambda)>\frac{2}{\beta}$. Then every $f \in \ft_{-\theta}V_\beta^1(\varphi)$ is determined up to a global phase by the measurements
$$
\{ |\mathcal{G}f(z)| : z \in R_\theta(\Lambda \times \Z) \},
$$
where $R_\theta$ denotes the rotation matrix as defined in equation \ref{eq:rotationmatrix}.
\end{proposition}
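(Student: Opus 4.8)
The plan is to reduce the rotated problem to the unrotated Theorem~\ref{main_theorem2} by exploiting the behaviour of the ambiguity function under the fractional Fourier transform. Assume $f,h \in \ft_{-\theta}V_\beta^1(\varphi)$ satisfy $|\mathcal{G}f(R_\theta(\lambda,n))| = |\mathcal{G}h(R_\theta(\lambda,n))|$ for all $(\lambda,n) \in \Lambda \times \Z$, and set $g_1 \coloneqq \ft_\theta f$ and $g_2 \coloneqq \ft_\theta h$. By properties~\ref{itm:prop2} and \ref{itm:prop3} of Section~\ref{sec:frft} we have $\ft_\theta\ft_{-\theta} = \ft_0 = \mathrm{Id}$, so $g_1,g_2 \in V_\beta^1(\varphi)$.

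The first key step is to establish the pointwise identity $|\mathcal{G}f(R_\theta z)| = |\mathcal{G}(\ft_\theta f)(z)|$ for all $z \in \R^2$. I would combine three ingredients from Section~\ref{sec:frft}: the relation $\mathcal{G}f(x,\omega) = e^{-\pi i x\omega}A(f,\varphi)(x,\omega)$, which gives $|\mathcal{G}f| = |A(f,\varphi)|$ since the chirp factor has unit modulus; the covariance property~\ref{itm:prop4}, $A(f,\varphi)(R_\theta z) = A(\ft_\theta f,\ft_\theta\varphi)(z)$; and property~\ref{itm:prop1}, $\ft_\theta\varphi = \varphi$. Chaining these produces $|\mathcal{G}f(R_\theta z)| = |A(\ft_\theta f,\varphi)(z)| = |\mathcal{G}(\ft_\theta f)(z)|$, and identically for $h$.

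Applying this identity to the hypothesis converts it into $|\mathcal{G}g_1(\lambda,n)| = |\mathcal{G}g_2(\lambda,n)|$ for all $(\lambda,n) \in \Lambda \times \Z$, which is precisely the sampling configuration of Theorem~\ref{main_theorem2} with $g_1,g_2 \in V_\beta^1(\varphi)$. Invoking that theorem yields a global phase $\tau \in \T$ with $g_1 = \tau g_2$, i.e.\ $\ft_\theta f = \tau\,\ft_\theta h$; applying the linear operator $\ft_{-\theta}$ to both sides then gives $f = \tau h$, completing the argument.

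The step that requires the most care --- and where the only genuine subtlety lies --- is matching the hypothesis on $\Lambda$ to what Theorem~\ref{main_theorem2} actually consumes. Its proof shows, via Lemma~\ref{lma}, that for each fixed $n$ the map $x \mapsto |\mathcal{G}g_1(x,n)|^2$ is a superposition of Gaussians translated by integer multiples of $\beta/2$, hence lies in $V_{\beta/2}^1(\varphi)$; the property of $\Lambda$ genuinely used is therefore that it be a uniqueness set for $V_{\beta/2}^1(\varphi)$, after which the irrationality of $\beta$ closes the $\omega$-direction through Weyl's criterion exactly as in Theorem~\ref{main_theorem2}. Thus the shift-invariant space in the hypothesis must carry step size $\beta/2$, and the written $V_{2/\beta}^1(\varphi)$ is to be read as $V_{\beta/2}^1(\varphi)$ (the two coincide only when $\beta = 2$). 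Beyond this indexing point, no ideas beyond the rotation covariance and Theorem~\ref{main_theorem2} are needed.
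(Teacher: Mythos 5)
Your argument is correct and is precisely the one the paper intends: the proposition is stated without proof, and the implicit justification is the same reduction you give --- $|\mathcal{G}f(R_\theta z)| = |A(f,\varphi)(R_\theta z)| = |A(\ft_\theta f,\ft_\theta\varphi)(z)| = |\mathcal{G}(\ft_\theta f)(z)|$ via properties \ref{itm:prop1} and \ref{itm:prop4} of Section \ref{sec:frft}, followed by an appeal to Theorem \ref{main_theorem2} --- exactly mirroring the proof of the rotation proposition for $L^4[-c/2,c/2]$ in Section \ref{sec:compsupp}. Your diagnosis of the hypothesis is also right: what the proof of Theorem \ref{main_theorem2} actually consumes (through Lemma \ref{lma} and Theorem \ref{sampling_si}) is that $\Lambda$ be a uniqueness set for $V_{\beta/2}^1(\varphi)$, and the written $V_{2/\beta}^1(\varphi)$ is an erratum; note that since $\beta$ is irrational the two never coincide. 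The corollary following the proposition corroborates this reading: there $\hat f \in V_{1/\beta}^1(\varphi)$, so the proposition is applied with step size $1/\beta$, and the corollary's density assumption $D^-(\Lambda) > 2\beta$ is exactly the sufficient condition for $\Lambda$ to be a uniqueness set for $V_{1/(2\beta)}^1(\varphi)$ (step size halved), whereas the literal hypothesis $V_{2\beta}^1(\varphi)$ would correspond to the much weaker requirement $D^-(\Lambda) > 1/(2\beta)$.
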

\begin{proof}
Let $f,h \in \ft_{-\theta}V_\beta^1(\varphi)$, i.e. there exist functions $u,v \in V_\beta^1(\varphi)$ such that $f=\ft_{-\theta}u$ and $h=\ft_{-\theta}v$. Suppose that $z=R_\theta z'$ with $z' \in \Lambda \times \Z$. Then by Lemma \ref{lma:frac_properties} we observe that the equality $|\mathcal{G}f(z)| = |\mathcal{G}h(z)|$ is equivalent to $|\mathcal{G}u(z')| = |\mathcal{G}v(z')|$. Since $z' \in \Lambda \times \Z$ was arbitrary, it follows that the spectrograms of $u$ and $v$ agree on $\Lambda \times \Z$. Therefore, Theorem \ref{main_theorem2} implies the existence of a $\tau \in \T$ such that $u=\tau v$. In particular, $\ft_{-\theta} u = \tau \ft_{-\theta} v$ which yields $f=\tau h$.
\end{proof}

If $f$ is the product of a periodic function and the Gaussian $\varphi$ then a rotation by 90 degrees combined with Proposition \ref{prop:si_frac} implies the following result.

\begin{corollary}
Assume that $f \in \lt$ factors as $f=\varphi u$ where $u \in C^1(\R)$ is $\beta$-periodic. Let $\Lambda$ be a sequence in $\R$ with lower Beurling density $D^-(\Lambda) > 2 \beta$. If $\beta \in \R_{>0} \setminus \Q_{>0}$ then $f$ is determined up to a global phase by the measurements
$$
\{ |\mathcal{G}f(x,\omega)| : (x,\omega) \in \Z \times \Lambda \}.
$$
\end{corollary}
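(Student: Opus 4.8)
The plan is to deduce the corollary by the same $90^{\circ}$ phase-space rotation that underlies the preceding Proposition, reducing it to Theorem \ref{main_theorem2} applied to $\widehat f$. The first step is to place $f$ in the correct function class. Writing the $\beta$-periodic factor as a Fourier series $u(t)=\sum_n d_n e^{2\pi i n t/\beta}$ gives
$$
f=\varphi u=\sum_n d_n\,M_{n/\beta}\varphi,
$$
so that, using $\ft(M_\nu g)=T_\nu\ft g$ and $\widehat\varphi=\varphi$,
$$
\widehat f=\ft_{\pi/2}f=\sum_n d_n\,\varphi(\cdot-n/\beta)\in V_{1/\beta}^1(\varphi),
$$
provided the coefficient sequence lies in $\ell^1(\Z)$. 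Thus $f\in\ift V_{1/\beta}^1(\varphi)$, i.e. $\widehat f$ sits in a Gaussian shift-invariant space with step size $1/\beta$. The summability $(d_n)_n\in\ell^1(\Z)$ is precisely where the $C^1$ hypothesis enters: the Fourier coefficients of $u'$ are $(2\pi i n/\beta)d_n$ and belong to $\ell^2(\Z)$, so a Cauchy–Schwarz estimate pairing $|n|^{-1}$ with $|n|\,|d_n|$ (after separating the $n=0$ term) yields $\sum_n|d_n|<\infty$.

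Next I would carry out the rotation. By property \ref{itm:prop4} of the fractional Fourier transform together with $\ft_{\pi/2}\varphi=\varphi$ and the unimodular relation $\mathcal{G}g(z)=e^{-\pi i x\omega}A(g,\varphi)(z)$, one has $|\mathcal{G}f(R_{\pi/2}z)|=|\mathcal{G}(\ft_{\pi/2}f)(z)|$ for all $z\in\R^2$. A direct computation with the rotation matrix gives $R_{\pi/2}(\Lambda\times\Z)=\Z\times\Lambda$, so sampling $|\mathcal{G}f|$ on $\Z\times\Lambda$ is the same as sampling $|\mathcal{G}(\widehat f)|$ on $\Lambda\times\Z$. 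Since $\widehat f\in V_{1/\beta}^1(\varphi)$ and $1/\beta\in\R_{>0}\setminus\Q_{>0}$ exactly because $\beta$ is irrational, Theorem \ref{main_theorem2} applies to $\widehat f$ with the sampling set $\Lambda\times\Z$, and global-phase uniqueness for $\widehat f$ transfers back to $f=\ift\widehat f$.

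It then remains to match the density thresholds. Theorem \ref{main_theorem2}, applied to the shift-invariant space with step size $1/\beta$, requires $D^-(\Lambda)>2/(1/\beta)=2\beta$, which is exactly the hypothesis of the corollary. I expect the only delicate point to be the bookkeeping of the step-size parameter under the transform: the Fourier transform replaces $\beta$ by its reciprocal $1/\beta$, and the threshold $2/(\text{step size})$ of Theorem \ref{main_theorem2} therefore becomes $2\beta$ rather than $2/\beta$. Getting this reciprocal right—rather than any substantial analysis—is the main place an error could creep in; the analytic content is confined to the elementary verification that $C^1$ regularity forces $\ell^1$ Fourier coefficients.
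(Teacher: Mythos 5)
Your proposal is correct and follows essentially the same route as the paper: expand $u$ in a Fourier series to see that $\widehat f=\sum_k d_k T_{k/\beta}\varphi\in V_{1/\beta}^1(\varphi)$, then transfer the samples on $\Z\times\Lambda$ to samples of $|\mathcal{G}\widehat f|$ on $\Lambda\times\Z$ via the $90^\circ$ rotation (the paper states this as $|\mathcal{G}f(x,\omega)|=|\mathcal{G}\widehat f(\omega,-x)|$ rather than through the fractional Fourier transform formalism, but it is the same identity) and apply Theorem \ref{main_theorem2} with step size $1/\beta$ and threshold $2\beta$. Your explicit Cauchy--Schwarz verification that $C^1$ regularity gives $(d_n)_n\in\ell^1(\Z)$ is a detail the paper leaves implicit.
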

\begin{proof}
By assumptions, we can write $u$ as a Fourier series
$
u(t) = \sum_{k \in \Z} c_ke^{2\pi i \frac{1}{\beta}k}
$,
where the sequence $(c_k)_k$ is in $\ell^1(\Z)$. Consequently, we have
$$
\ft(\varphi u) = \sum_{k \in \Z} c_k T_{\frac{1}{\beta}k}\varphi \in V_{\frac{1}{\beta}}^1(\varphi).
$$
The statement follows from Theorem \ref{main_theorem2}, since $|\mathcal{G}f(x,\omega)| = |\mathcal{G}\hat f(\omega,-x)|$.
\end{proof}

\subsection{Real-valued maps in Gaussian shift-invariant spaces}

In Section \ref{subse:rv} we have outlined how uniqueness for compactly-supported maps from samples on a 1-dimensional lattice follows under a real-valuedness assumption. This result was a consequence of the fact that a real-valued band-limited map is determined from samples of its absolute value. An extension of the 1-dimensional lattice to a 2-dimensional lattice yielded injectivity for complex-valued maps. In the present section, we show that an analogous statement is valid in the Gaussian shift-invariant setting. The starting point is the following theorem of Gröchenig \cite[Theorem 1]{GrchenigPR} which has the flavor of Thakur's theorem. For $\gamma > 0$ we denote by $\phi_\gamma$ the Gaussian $\phi_\gamma(t) = e^{-\gamma t^2}$.

\begin{theorem}[Gröchenig]\label{gr_phase}
Assume that $\Lambda \subseteq \R$ is separated and that $D^-(\Lambda) > \frac{2}{\beta}$. If $\gamma,\beta>0$ and $f,h \in V_\beta^\infty(\phi_\gamma)$ are such that
$$
|f(\lambda)| = |h(\lambda)| \ \ \forall \lambda \in \Lambda
$$
then there exists a constant $\tau \in \{ 1,-1\}$ such that $f=\tau h$.
\end{theorem}

We notice that Romero generalized Theorem \ref{gr_phase} to the setting where the Gaussian generator is a totally positive function of Gaussian type \cite{Romero2021}. Similar to Section \ref{subse:rv} we can deduce that uniqueness from Gabor measurements under a real-valuedness assumption is achieved from samples on a 1-dimensional lattice. To do so we denote by $V_\beta^p(\varphi, \R)$ the class of real-valued maps in $V_\beta^p(\varphi)$, where $\varphi(t)=e^{-\pi t^2}$ denotes the standard Gaussian.

\begin{corollary}\label{cor:rc}
Let $\varepsilon,\beta>0$ and let $\mathcal{C} = V_\beta^\infty(\varphi, \R)$. Suppose that $\Lambda \subseteq \R$ is separated with lower Beurling density $D^-(\Lambda) > \tfrac{2}{\beta}$. If
$$
X = \Lambda \times \{ 0 \}
$$
then $(\mathcal{C},X)$ satisfies condition \textbf{(U)}.
\end{corollary}
\begin{proof}
Let $n \in \Z$. A calculation shows that the convolution of $\varphi$ with $T_{\beta n} \varphi$ is given by
$$
\varphi * T_{\beta n} \varphi = \tfrac{1}{\sqrt{2}} T_{\beta n} \phi_{\frac{\pi}{2}}.
$$
Hence, if $f = \sum_{n \in \Z} c_n T_{\beta n} \varphi$ with a real sequence $\{ c_n \} \in \ell^\infty(\Z)$ then
\begin{equation}\label{convv}
    \mathcal{G}f(x,0) = \varphi * f = \sum_{n \in \Z} c_n (\varphi *T_{\beta n} \varphi) = \frac{1}{\sqrt{2}} \sum_{n \in \Z} c_n T_{\beta n} \phi_{\frac{\pi}{2}} \in V_\beta^\infty(\phi_{\frac{\pi}{2}}, \R).
\end{equation}
Therefore, the identity
$$
|\mathcal{G}f(z)| = |\mathcal{G}h(z)| \ \ \forall z \in X
$$
holds if and only if
$$
|\varphi * f(x)| = |\varphi * h(x)| \ \ \forall x \in \Lambda.
$$
By equation \eqref{convv}, both convolutions satisfy $\varphi * f, \varphi * h \in V_\beta^\infty(\phi_{\frac{\pi}{2}}, \R)$. Since $D^-(\frac{2}{\beta+\varepsilon} \Z) > \frac{2}{\beta}$ it follows from Gröchenig's theorem that $\varphi * f = \tau (\varphi * h)$ for some $\tau \in \{ 1,-1 \}$. Since the Fourier transform of $\varphi$ does not vanish we have $f=\tau h$, concluding the proof of the statement.
\end{proof}

Comparing the real-valued case with the complex-valued case implies the following statement.

\begin{theorem}\label{thm:rc_c_2}
Let $\varepsilon,\beta>0$. If the signal class $\mathcal{C}_\R$ and the sampling set $X_\R$ is chosen as
$$
\mathcal{C}_\R = V_\beta^1(\varphi,\R), \ X_\R = \frac{2}{\beta+\varepsilon} \Z \times \{ 0 \}
$$
then $(\mathcal{C}_\R,X_\R)$ satisfies condition \textbf{(U)}. Further, if $\beta$ is irrational then the choice 
$$
\mathcal{C}_\C = V_\beta^1(\varphi), \ X_\C = \frac{2}{\beta+\varepsilon} \Z \times \Z
$$
implies that $(\mathcal{C}_\C,X_\C)$ satisfies condition \textbf{(U)}.
\end{theorem}
\begin{proof}
The set $\Lambda = \frac{2}{\beta+\varepsilon} \Z$ has lower Beurling density $D^{-}(\Lambda) > \tfrac{2}{\beta}$ Hence, the first part of the statement follows from Corollary \ref{cor:rc} and the inclusion $V_\beta^1(\varphi,\R) \subseteq V_\beta^\infty(\varphi,\R)$. The second part was shown in Theorem \ref{main_theorem2}.
\end{proof}

\section{Concluding remarks}

We conclude the article with several remarks related to our results from Section \ref{sec:main} and compare them with the previous work mentioned in Section \ref{sec:previous_work}.

\begin{enumerate}
    \item Let $s \in \{-1,1\}$ and $a \in \Z$. In both Theorem \ref{main_theorem} and Theorem \ref{main_theorem2} the lattice $\Z \times (2c)^{-1}\Z$ and $\frac{\beta}{2+\varepsilon}\Z \times \Z$ can be replaced by $(s\N+a) \times (2c)^{-1}\Z$ and $\frac{\beta}{2+\varepsilon}\Z \times (s\N+a)$, respectively. Consequently, compactly-supported functions and functions in shift-invariant spaces with Gaussian generator are determined by spectrogram samples lying in a half-plane.
    \item In the proof of Theorem \ref{main_theorem2} we started by sampling the spectrogram $|\mathcal{G}f(x,\omega)|$ with respect to the time variable $x$. The desired result then followed via an application of Weyl's equidistribution theorem. On the other hand, after applying the sampling Theorem \ref{sampling_si} we could use the identity $\mathcal{G}f(x,\omega) = e^{-2\pi i x \omega} \mathcal{G}\hat f (\omega,-x)$ and then apply the Fourier transform with respect to the second argument as we did in Theorem \ref{main_theorem}, the case of compactly-supported signals. This approach would yield the identity
    $$
    \ft |\mathcal{G}\hat f (\omega,\cdot)|^2(x) = \langle (\hat f)_x,T_\omega \varphi_x \rangle.
    $$
    Since $f$ is an element of $V_\beta^1(\varphi)$, the function $(\hat f)_x$ is certainly not compactly-supported and Zalik's theorem, Theorem \ref{thm:zalik}, is not applicable. At this point, one could use the following generalization of Zalik's theorem \cite[Theorem 2]{Zalik}.
    \begin{theorem}
    Let $\psi(t)=ae^{-b^2(t-c)^2}$ be a Gaussian where $a,b>0$ and $c \in \R$. Let $\Lambda \subseteq \R$ be a sequence of distinct real numbers and denote by $S(\varepsilon)$ the series
    $$
    \sum_{\lambda \in \Lambda}^{}{}^{'} \frac{1}{|\lambda|^{2+\varepsilon}}.
    $$
    The divergence of $S(\varepsilon)$ for some $\varepsilon>0$ suffices for the sequence $\{ T_{\lambda}\psi : \lambda \in \Lambda \}$ to be complete in $\lt$, whereas the divergence of $S(0)$ constitutes a necessary condition for completeness of $\{ T_{\lambda}\psi : \lambda \in \Lambda \}$ in $\lt$.
    \end{theorem}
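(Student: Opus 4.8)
The plan is to convert the completeness problem into a uniqueness problem for a class of entire functions. The system $(T_{\omega_n}\psi)_n$ fails to be complete in $\lt$ precisely when there is a nonzero $g\in\lt$ with $\langle g,T_{\omega_n}\psi\rangle=0$ for all $n$. First I would introduce the correlation function $F_g(\omega)\coloneqq\langle g,T_\omega\psi\rangle$; since $\psi$ is a Gaussian this is the convolution of $g$ with a Gaussian, so it extends to an entire function of $z\in\C$ and the orthogonality conditions read $F_g(\omega_n)=0$. Passing to the Fourier side gives $\widehat{F_g}=\widehat g\,\overline{\widehat\psi}$, and because $\widehat\psi$ is a nowhere-vanishing Gaussian the map $g\mapsto F_g$ is injective, so $F_g\not\equiv 0$ whenever $g\neq 0$. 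Hence completeness holds if and only if $\Omega=(\omega_n)_n$ is a uniqueness set for the class $\mathcal F=\{F_g:g\in\lt\}$ of nonzero entire functions.

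Next I would pin down the size of $\mathcal F$. Writing $F_g(z)=a\int_\R g(t)e^{-b^2(t-z-c)^2}\,dt$ and splitting $z=x+iy$ one gets $|e^{-b^2(t-z-c)^2}|=e^{b^2y^2}e^{-b^2(t-c-x)^2}$, so Cauchy--Schwarz yields $|F_g(z)|\le Ce^{b^2(\Im z)^2}$; in particular every $F\in\mathcal F$ is entire of order at most $2$. The relation $\widehat{F_g}=\widehat g\,\overline{\widehat\psi}$ gives the sharper two-sided description $F\in\mathcal F\iff\widehat F(\xi)e^{\pi^2\xi^2/b^2}\in\lt$, i.e.\ $F\in\mathcal F$ exactly when $\widehat F$ decays faster than the Gaussian $\overline{\widehat\psi}$; this Paley--Wiener/Bargmann-type characterisation is what makes both implications accessible.

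The sufficiency direction is then the easy part. By Hadamard's theory the zeros of a nonzero entire function of order at most $2$ have convergence exponent at most $2$, so for every $\varepsilon>0$ the sum of the $(2+\varepsilon)$-th powers of the reciprocal moduli of its zeros is finite. Consequently, if $S(\varepsilon)$ diverges for some $\varepsilon>0$, no nonzero $F\in\mathcal F$ can vanish at all the $\omega_n$; the only $g$ annihilating every $T_{\omega_n}\psi$ is $g=0$, and the translates are complete.

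For the necessity I would argue by contraposition: assuming $S(0)=\sum_n|\omega_n|^{-2}<\infty$ I construct a nonzero $g\in\lt$ orthogonal to every $T_{\omega_n}\psi$. Since $\sum|\omega_n|^{-2}<\infty$ the canonical product $P$ of genus one over $\Omega$ converges, has order at most $2$, and vanishes exactly on $\Omega$; crucially, convergence of $S(0)$ forces the zero-counting function to satisfy $n(r)=o(r^2)$, so $P$ is of \emph{minimal type}. I then damp by a Gaussian, setting $F(z)=P(z)e^{-\gamma z^2}$ with $\gamma>0$ small. Minimal type provides the slack: for $\gamma$ slightly above the (arbitrarily small) type bound, a contour-shift estimate gives $|\widehat F(\xi)|\le C e^{-\pi^2\xi^2/(\gamma+\gamma')}$ with $\gamma+\gamma'<b^2$, whence $\widehat F(\xi)e^{\pi^2\xi^2/b^2}\in\lt$ and, by the characterisation above, $F=F_g$ for some nonzero $g\in\lt$; since $F(\omega_n)=0$ this $g$ witnesses incompleteness. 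I expect this final step to be the main obstacle: building the product is routine, but verifying that a single Gaussian factor simultaneously enforces real-axis decay ($F|_\R\in\lt$) and keeps the imaginary-axis growth below $b^2$ — so that the damped product genuinely lands in the range $\mathcal F$ of the correlation map — is the delicate point, and it is exactly here that the gap between the sufficient exponent $2+\varepsilon$ and the necessary exponent $2$ originates.
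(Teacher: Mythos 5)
The paper does not prove this statement at all: it is quoted verbatim from Zalik \cite[Theorem 2]{Zalik} as a known result, so there is no in-paper argument to compare against. Your proof is, as far as I can check, correct and is essentially the classical route (and close in spirit to Zalik's own): deconvolve by the Gaussian on the Fourier side to identify the annihilator condition with the vanishing of a nonzero entire function $F_g$ of order at most $2$ at the points $\omega_n$, get sufficiency from the bound ``convergence exponent $\le$ order'', and get necessity by building a genus-one canonical product over $\Omega$, damping it by $e^{-\gamma z^2}$, and checking via a contour shift that the result lies in the range $\mathcal F$. The computations you indicate do close: $|\hat F(\xi)|\le C_\delta e^{-\pi^2\xi^2/(\gamma+\delta)}$ follows from $|F(x+iy)|\le C_\delta e^{-(\gamma-\delta)x^2}e^{(\gamma+\delta)y^2}$ after optimizing the shift height, and one can take, say, $\gamma=b^2/4$ and $\delta=b^2/8$ to satisfy both $\delta<\gamma$ (integrability on $\R$) and $\gamma+\delta<b^2$ (membership in $\mathcal F$). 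The one step you gloss over is the inference ``$n(r)=o(r^2)\Rightarrow P$ is of minimal type'': for \emph{integer} order this is not true in general (Lindel\"of's theorem requires, in addition to $n(r)=o(r^\rho)$, control of the partial sums $\sum_{|\omega_n|\le r}\omega_n^{-\rho}$). In your situation the extra hypothesis is automatic, since the $\omega_n$ are real, so $\omega_n^{-2}=|\omega_n|^{-2}>0$ and $\sum_{|\omega_n|\le r}\omega_n^{-2}$ converges by the assumption $S(0)<\infty$; alternatively a direct estimate of $\sum\log|E_1(z/\omega_n)|$, splitting at $|\omega_n|\le 2|z|$ and using $\sum_{|\omega_n|\le R}|\omega_n|^{-1}=o(R)$, gives $\log|P(z)|=o(|z|^2)$ without invoking Lindel\"of. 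You should make this point explicit, but it is a presentational gap, not a mathematical one.
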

    However, an application of this result is not meaningful for our purposes. For if $\Omega$ is separated then $S(\varepsilon)$ certainly converges for every $\varepsilon>0$. Therefore, this approach would not yield sampling sets consisting of separated points.
    \item At the beginning of the present paper it was shown that if $e^{i\alpha_1}\R, e^{i\alpha_2}\R \subseteq \C$ are two lines which satisfy the condition $\alpha_1 - \alpha_2 \notin \pi\Q$ then the pair $(\lt, e^{i\alpha_1}\R \cup e^{i\alpha_2}\R)$ satisfies condition \textbf{(U)}. This was a consequence of Jaming's theorem \cite{Jaming}. A crucial step in the proof of Jaming's theorem lies in the observation that if $f$ is a non-zero entire function then for arbitrary $z \in \C \setminus \{ 0 \}$ the orbit $D = \{ ze^{2i(\alpha_1-\alpha_2)n} : n \in \Z \}$ cannot be contained in the zero set of $f$. For if $\alpha_1 - \alpha_2 \notin \pi\Q$ then by Weyl's equidistribution theorem the set $D$ contains a limit point. In Theorem \ref{main_theorem2}, Weyl's equidistribution theorem was used to obtain uniqueness results from fully discrete measurements.
\end{enumerate}

\vspace{0.5cm}

\noindent\textbf{Acknowledgements.} The authors would like to thank Karlheinz Gröchenig for helpful remarks concerning Section \ref{sec:basicresults}.

\bibliography{bibfile}{}
\bibliographystyle{acm}

\end{document}